\newtheorem{Theorem}{Theorem}[section]
\newtheorem{prop}{Proposition}
\newtheorem{conj}{Conjecture}
\newtheorem{lemma}{Lemma}
\theoremstyle{definition}
\newtheorem{Remark}[Theorem]{Remark}
\def\I{{I}}
\def\hX{\widehat{X}}
\def\hY{\widehat{Y}}
\def\hg{\hat{\mathfrak{g}}}
\def\1{{{1}}}
\def\la{\langle}
\def\ra{\rangle}
\def\k{\Bbbk}
\def\sl{\mathfrak{sl}}
\def\h{\mathfrak{h}}
\def\Z{\mathbb Z}
\def\N{\mathbb N} 
\def\C{\mathbb C}
\def\g{\mathfrak{g}}
\def\h{\widehat{\mathfrak{h}}}
\def\hg{\widehat{\mathfrak{g}}}
\def\l{\lambda}
\def\Sym{\mathrm{Sym}}
\def\id{\mathrm{id}}
\newcommand{\dU}{\dot{U}}
\title{Loop realizations of quantum affine algebras}
\begin{document} 
\setcounter{tocdepth}{1}

\author{Sabin Cautis}
\email{cautis@usc.edu}
\address{Department of Mathematics\\ University of Southern California \\ Los Angeles, CA}

\author{Anthony Licata}
\email{amlicata@gmail.com}
\address{Department of Mathematics \\ Australian National University \\ Canberra, Australia}

\begin{abstract}
We give a simplified description of quantum affine algebras in their loop presentation. This description is related to Drinfeld's new realization via halves of vertex operators. We also define an idempotent version of the quantum affine algebra which is suitable for categorification.
\end{abstract}

\maketitle
\tableofcontents

\section{Introduction}

Let $\g$ be a finite dimensional simple simply-laced complex Lie algebra.  There are two well-known ways to add a parameter to $\g$.  The first is $q$-deformation, where one deforms the enveloping algebra 
$U(\g)$ to a new algebra $U_q(\g)$, commonly known as a quantum group.  The second is affinization, where one replaces the Lie algebra $\g$ by the affine Lie algebra $\hg := \g \otimes \k[t,t^{-1}] \oplus \k c$.  The Lie algebra $\hg$ is a central extension of the loop algebra of $\g$.   Both affine Lie algebras and quantum groups arise in representation theory, low dimensional topology, algebraic geometry, and many other parts of mathematics and mathematical physics. 

The quantum affine algebra $U_q(\hg)$, which is a deformation of the enveloping algebra of $\hg$, combines both ideas.  However, the description of $U_q(\hg)$ is much more complicated than that of either $U_q(\g)$ or $\hg$.  Perhaps on account of connections to physics, the relations in $U_q(\hg)$ are typically encoded in generating functions (Section \ref{sec:drinfeld}), and the resulting presentation is known as Drinfeld's realization \cite{D}.  A central role in this presentation is played by the quantum Heisenberg algebra $\h \subset U_q(\hg)$, since many of the relations in $U_q(\hg)$ are expressed using generating functions whose terms are in $\h$. In particular, explicit constructions of representations of $U_q(\hg)$ (such as the Frenkel-Jing homogeneous construction of the basic representation \cite{FJ}) as well as explicit isomorphisms between different presentations of $U_q(\hg)$ (for example the work of Damiani and Beck \cite{Da,B}) often make crucial use of the quantum Heisenberg algebra $\h$.

The main motivation for the current note comes from categorification in the representation theory of quantum affine algebras.  In the accompanying paper \cite{CL2}, we construct a 2-representation of the quantum affine (and quantum toroidal) algebra on the derived categories of Hilbert schemes of points on the surface $\widehat{\C^2/\Gamma}$, where $\Gamma \subset SL_2(\C)$ is the finite subgroup associated to $\g$ under the McKay correspondence.  After passing to equivariant K-theory, this gives a representation of $U_q(\hg)$.  However, the presentation of $U_q(\hg)$ which appears naturally in this construction is not identical to Drinfeld's new realization (though it is very similar).  Thus, in the process of producing an action of $U_q(\hg)$ on equivariant K-theory, we were forced to show an equivalence between various presentations of the quantum affine algebra.   
Since these results seems to be of some independent interest and since they do not require 2-categories or algebraic geometry, we decided to write them down independently from \cite{CL2}.   We emphasize, however, that this paper, and Theorem \ref{thm:main} in particular, has application to the proof that quantum affine algebras act on the equivariant K-theory of Hilbert schemes.

We begin the current paper by rewriting the Drinfeld realization using halves of vertex operators. This gives a different set of generators of the quantum Heisenberg subalgebra $\h$ (Section \ref{sec:vertex}). Next we describe an idempotent modification $\dU_q(\hg)$ of the quantum affine algebra in Section \ref{sec:idempotent}. The idempotent version $\dU_q(\hg)$ is not a unital algebra anymore, but it has a somewhat simplified presentation. Moreover, any representation of $\dU(\hg)$ automatically gives rise to a representation of the quantum affine algebra. The presentation of $\dU_q(\hg)$ which we use in \cite{CL2} is, up to a slight renomalization, the presentation in Section \ref{sec:idempotent}. We describe this renormalized realization in Section \ref{sec:renormalized}. Much of this is known to experts in the field. 

Our main contribution is to explain how the relations in Drinfeld's new realization (and also in our idempotent modification) are far from being minimal. More precisely, in Section \ref{sec:minimal} we give a smaller set of relations in $\dU_q(\hg)$ and prove (Theorem \ref{thm:main}) that all other relations are a consequence of these relations. In the last section we speculate on a minimal realization of $\dU_q(\hg)$ (Conjecture \ref{conj:1}) which is generated only by $E$s and $F$s and contains just four kinds of relations. 

\noindent {\bf Acknowledgments:}
We would like to thank Pavel Etingof, Sachin Gautam and David Hernandez for some helpful discussions, and to the referee for several useful comments. S.C. was supported by NSF grants DMS-0964439, DMS-1101439 and the Alfred P. Sloan foundation. A.L. would like to acknowledge the support of the Institute for Advanced Study.

\section{Main definitions and results}

\subsection{Dynkin data}

We work over a base field $\k$ of characteristic zero. Fix a simply-laced Dynkin diagram of finite type and denote its vertex set by $\I$ and the associated Lie algebra by $\g$. We denote the weight lattice of $\g$ by $X$ and the root lattice of $\g$ by $Y$. Thus $Y$ is a sublattice of $X$. We equip $X$ with the standard pairing $\la \cdot, \cdot \ra$. For $i \in \I$, $\alpha_i \in Y$ and $\Lambda_i \in X$ will denote the simple roots and fundamental weights. We will often write $i$ instead of $\alpha_i$, especially in the pairing; thus for $\l \in X$ we will write $\la \l, i \ra$ instead of $\la \l, \alpha_i \ra$.  

The pairing of simple roots satisfies $\la i, j \ra= C_{i,j}$, where $C_{i,j}$ is the Cartan matrix of $\g$. In particular, we have:
\begin{itemize}
\item $\la i, i \ra = 2$ for all $i\in \I$,
\item $\la i, j \ra = -1$ when $i\neq j \in \I$ are joined by an edge,
\item $\la i,j \ra = 0$ when $i\neq j \in \I$ are not joined by an edge, and
\item $\la \Lambda_i, j \ra = \delta_{i,j}$ for all $i, j \in \I.$
\end{itemize}

Let $\hg = \g \otimes \k[t,t^{-1}] \oplus \k c$ be the affine Lie algebra associated to $\g$, and denote by $\hX$ the affine weight lattice. The affine root lattice is denoted $\hY$. Note that $\hY = Y \oplus \Z \delta$ where $\la i, \delta \ra = 0$, $\la \Lambda_i, \delta \ra = 1$ for all $i \in \I$, and $\la \delta, \delta \ra = 0$. 

\subsection{Graded vector spaces}

Consider a $\Z$-graded finite dimensional vector space $V = \oplus_i V(i)$. One can associate to $V$ the polynomial $f_V := \sum_i q^i \dim V(i)$. This gives a bijection between isomorphism classes of finite dimensional graded vector spaces and elements $f \in \N[q,q^{-1}]$. 

From $V$ one can construct the associated $\Z$-graded vector spaces $\Sym^n(V)$ and $\Lambda^n(V)$. If $V$ has graded dimension $f \in \N[q,q^{-1}]$ then we denote by $\Sym^n(f)$ and $\Lambda^n(f)$ the graded dimensions of the $\Z$-graded vector spaces $\Sym^n(V)$ and $\Lambda^n(V)$.  For example, if $f = q+q^{-1}$ then $\Sym^n(f) = q^{n} + q^{n-2} + \dots + q^{-n+2} + q^{-n}$ which is just the quantum integer $[n+1]$. On the other hand, $\Lambda^n(f)$ is $1,[2],1$ if $n=0,1,2$, and is zero otherwise.

\subsection{The Drinfeld realization}\label{sec:drinfeld}

We begin with the Drinfeld realization of the quantum affine algebra as defined in \cite[Sec. 1.2]{Nak1}. It has generators $e_{i,r}, f_{i,r}$ ($i \in \I$, $r \in \Z$), $q^h$ ($h \in X^*$) and $h_{i,m}$, $q^{\pm d}$, $q^{\pm c/2}$ where $i \in \I$ and $m \in \Z \setminus \{0\}$. The set of relations are as follows:
\begin{enumerate}
\item $q^{\pm c/2}$ is central 
\item $q^0=1$, $q^h q^{h'} = q^{h+h'}$, $[q^h, h_{i,m}]=0$, $q^d q^{-d} = 1$, $q^{c/2} q^{-c/2} = 1$
\item $\psi_i^\pm(z) \psi_j^{\pm}(w) = \psi_j^{\pm}(w) \psi_i^{\pm}(z)$
\item $\psi^-_i(z) \psi_j^+(w) = \frac{(z-q^{- \la i, j \ra} q^c w)(z - q^{\la i, j \ra} q^{-c} w)}{(z-q^{\la i, j \ra} q^c w)(z - q^{- \la i, j \ra} q^{-c} w)} \psi^+_j(w) \psi_i^-(z)$
\item $[q^d, q^h]=0$, $q^d h_{i,m} q^{-d} = q^m h_{i,m}$ and $q^d e_{i,r} q^{-d} = q^r e_{i,r}$, $q^d f_{i,r} q^{-d} = q^r f_{i,r}$
\item $(q^{\pm s c/2} z - q^{\pm \la i, j \ra} w) \psi_j^s(z) x_i^\pm(w) = (q^{\pm \la i, j \ra} q^{\pm sc/2} z - w) x_i^{\pm}(w) \psi_j^s(z)$ where $s = \pm$
\item $[x_i^+(z),x_j^-(w)] = \delta_{ij} \frac{1}{q-q^{-1}} \big(\delta(q^c wz^{-1}) \psi_i^+(q^{\frac{c}{2}} w) - \delta(q^c zw^{-1}) \psi^-_i(q^{\frac{c}{2}}z) \big)$
\item $(z-q^{\pm 2}w) x_i^\pm(z) x_i^\pm(w) = (q^{\pm 2} z -w) x_i^\pm(w) x_i^\pm(z)$
\item $(z-q^{\mp 1} w) x_i^\pm(z) x_j^\pm(w) = x_j^\pm(w) x_i^\pm(z) (q^{\mp 1} z-w)$ if $\la i, j \ra = -1$
\item if $\la i,j \ra \le 0$ then
$$\sum_{\sigma \in S_N} \sum_{s=0}^{N = 1 - \la i,j \ra} (-1)^s \left[ \begin{matrix} N \\ s \end{matrix} \right] x_i^\pm(z_{\sigma(1)}) \hdots x_i^\pm(z_{\sigma(s)}) x_j^\pm(w) x_i^\pm(z_{\sigma(s+1)}) \hdots x_i^\pm(z_{\sigma(N)}) = 0.$$
\end{enumerate}
In the above relations we have
\begin{eqnarray*}
x_i^+(z) &=& \sum_{n \in \Z} e_{i,n} z^{-n} \hspace{1cm} 
x_i^-(z) = \sum_{n \in \Z} f_{i,n} z^{-n} \\
\psi_i^\pm(z) &=& \sum_{n \geq 0} \psi_i^\pm(\pm n) z^{\mp n} = q^{\pm h_i} \exp \big(\pm (q-q^{-1}) \sum_{n>0} h_{i, \pm n} z^{\mp n} \big) \\
\delta(z) &=& \sum_{n \in \Z} z^n
\end{eqnarray*}
Notice that we only deal with the case when the affine Lie algebra is simply laced. 

\subsection{The vertex realization}\label{sec:vertex}

We now rewrite Drinfeld's realization in terms of halves of vertex operators. This means that instead of $h$'s we will use $P$s and $Q$s defined as homogeneous components in $z$ of generating functions:
\begin{eqnarray*}
&& \sum_{n \ge 0} Q_i^{(n)} z^n := \exp \left( \sum_{n \ge 1} \frac{h_{i,n}}{[n]} z^n \right) \text{ and } \sum_{n \ge 0} (-1)^n Q_i^{(1^n)} z^n := \exp \left( - \sum_{n \ge 1} \frac{h_{i,n}}{[n]} z^n \right) \\ 
&& \sum_{n \ge 0} P_i^{(n)} z^n := \exp \left( \sum_{n \ge 1} \frac{h_{i,-n}}{[n]} z^n \right) \text{ and } \sum_{n \ge 0} (-1)^n P_i^{(1^n)} z^n := \exp \left( - \sum_{n \ge 1} \frac{h_{i,-n}}{[n]} z^n \right).
\end{eqnarray*}
These operators were also considered in \cite{FJ} and \cite[Lemma 3.2]{CP2}. We also define
\begin{equation}\label{eq:rescale}
E_{i,r} := (q^{-c/2})^r e_{i,r} \text{ and } F_{i,r} := (q^{-c/2})^r f_{i,r}.
\end{equation}
For convenience we also define 
\begin{eqnarray*}
Q_i^{[1^n]} := \sum_{m=0}^n (-q)^m [m] Q_i^{(1^{n-m})} Q_i^{(m)} &\text{ and }&
Q_i^{[n]} := \sum_{m=0}^n (-q)^m [m] Q_i^{(n-m)} Q_i^{(1^m)} \\
P_i^{[1^n]} := \sum_{m=0}^n (-q)^{-m} [m] P_i^{(1^{n-m})} P_i^{(m)} &\text{ and }&
P_i^{[n]} := \sum_{m=0}^n (-q)^{-m} [m] P_i^{(n-m)} P_i^{(1^m)} 
\end{eqnarray*}
Note that $P_i^{[1]} = -q^{-1}P_i$ and $Q_i^{[1]} = -qQ_i$.

Thus, we have generators $E_{i,r},F_{i,r}$, $q^h$ ($h \in X^*$), $q^{\pm d}$, $q^{\pm c/2}$,$P_i^{(n)}, P_i^{(1^n)}$ $Q_i^{(n)}$ and $Q_i^{(1^n)}$ where $i \in \I, r \in \Z$ and $n \in \N$. The set of relations between these generators are now given by
\begin{enumerate}
\item $q^{\pm c/2}$ is central 
\item $q^0=1$, $q^h q^{h'} = q^{h+h'}$, $[q^h, P^{(n)}] = 0 = [q^h, Q^{(n)}]$, $q^d q^{-d} = 1$, $q^{c/2} q^{-c/2} = 1$
\item The generators $\{P_i^{(n)},P_i^{(1^n)}\}_{i \in \I}$ commute among each other and likewise $\{Q_i^{(n)},Q_i^{(1^n)}\}_{i \in \I}$ commute among each other.
\item We have 
\begin{eqnarray}
\label{eq:PQ1} Q_j^{(n)} P_i^{(m)} &=&
\begin{cases}
\sum_{k \ge 0} \Sym^k([2][c]) P_i^{(m-k)} Q_i^{(n-k)} \text{ if } i = j \\
\sum_{k \ge 0} (-1)^k \Lambda^k([c])P_i^{(m-k)} Q_j^{(n-k)} \text{ if } \la i, j \ra = -1 \\
P_i^{(m)} Q_j^{(n)} \text{ if } \la i, j \ra = 0
\end{cases} \\
\label{eq:PQ2}
Q_j^{(1^n)} P_i^{(m)} &=&
\begin{cases}
\sum_{k \ge 0} \Lambda^k([2][c]) P_i^{(m-k)} Q_i^{(1^{n-k})} \text{ if } i = j \\
\sum_{k \ge 0} (-1)^k \Sym^k([c])  P_i^{(m-k)} Q_j^{(1^{n-k})} \text{ if } \la i, j \ra = -1 \\
P_i^{(m)} Q_j^{(1^n)} \text{ if } \la i, j \ra = 0
\end{cases}
\end{eqnarray}
and likewise if you exchange $(a)$ and $(1^a)$ everywhere.

\item $[q^d, q^h]=0$, $q^d Q_i^{(m)} q^{-d} = q^m Q_i^{(m)}$ $q^d P_i^{(m)} q^{-d} = q^{-m} P_i^{(m)}$ and $q^d (E_{i,r}) q^{-d} = q^r (E_{i,r})$, $q^d (F_{i,r}) q^{-d} = q^r (F_{i,r})$

\item We have
\begin{eqnarray*}
\label{eq:qE1}
q^c [Q_i^{[1^{a+1}]}, E_{i,b}] &=&
\begin{cases}
q^2 Q_i^{[1^a]} E_{i,b+1} - q^{-2} E_{i,b+1} Q_i^{[1^a]} \text{ if } a > 0 \\
[2]E_{i,b+1} \text{ if } a = 0.
\end{cases} \\
\label{eq:qE2}
[Q_i^{[1^{a+1}]}, F_{i,b}] &=&
\begin{cases}
q^{-2} Q_i^{[1^a]} F_{i,b+1} - q^2 F_{i,b+1} Q_i^{[1^a]} \text{ if } a > 0 \\
- [2]F_{i,b+1}\text{ if } a = 0.
\end{cases} \\
\label{eq:qE3}
[P_i^{[1^{a+1}]}, E_{i,b+1}] &=&
\begin{cases}
q^2 E_{i,b} P_i^{[1^{a}]} - q^{-2} P_i^{[1^{a}]} E_{i,b} \text{ if } a > 0 \\
[2] E_{i,b} \text{ if } a = 0
\end{cases} \\
\label{eq:qE4}
q^{-c} [P_i^{[1^{a+1}]}, F_{i,b+1}] &=&
\begin{cases}
q^{-2} F_{i,b} P_i^{[1^{a}]} - q^2 P_i^{[1^{a}]} F_{i,b} \text{ if } a > 0 \\
- [2] F_{i,b} \text{ if } a = 0.
\end{cases}
\end{eqnarray*}
while if $\la i, j \ra = -1$ we have
\begin{eqnarray*}
\label{eq:qE5}
q^c [Q_j^{[1^{a+1}]}, E_{i,b}] &=&
\begin{cases}
- qE_{i,b+1} Q_j^{[1^a]} + q^{-1} Q_j^{[1^a]} E_{i,b+1}\text{ if } a > 0 \\
- E_{i,b+1} \text{ if } a = 0.
\end{cases} \\
\label{eq:qE6}
[Q_j^{[1^{a+1}]}, F_{i,b}] &=&
\begin{cases}
- q^{-1} F_{i,b+1} Q_j^{[1^a]} + q Q_j^{[1^a]} F_{i,b+1} \text{ if } a > 0 \\
F_{i,b+1} \text{ if } a = 0
\end{cases} \\
\label{eq:qE7}
[P_j^{[1^{a+1}]}, E_{i,b+1}] &=&
\begin{cases}
- q^{-1}E_{i,b} P_j^{[1^{a}]} + q P_j^{[1^{a}]} E_{i,b} \text{ if } a > 0 \\
- E_{i,b} \text{ if } a = 0
\end{cases} \\
\label{eq:qE8}
q^{-c} [P_j^{[1^{a+1}]}, F_{i,b+1}] &=&
\begin{cases}
- qF_{i,b} P_j^{[1^{a}]} + q^{-1} P_j^{[1^{a}]} F_{i,b} \text{ if } a > 0 \\
F_{i,b} \text{ if } a = 0;
\end{cases}
\end{eqnarray*}
if $\la i,j \ra = 0$ we have that $P_j^{[1^a]}$ and $Q_j^{[1^a]}$ both commute with $E_{i,b}$ and $F_{i,b}$.

\item We have
$$[E_{i,a}, F_{i,b}] =
\begin{cases}
q^{ac} q^{h_i} Q_i^{[1^{a+b}]} \text{ if } a+b > 0 \\
q^{bc} q^{-h_i} P_i^{[1^{-a-b}]} \text{ if } a+b < 0 \\
\frac{q^{ac} q^{h_i}  - q^{-ac} q^{-h_i}}{q-q^{-1}} \text{ if } a+b=0.
\end{cases}$$
while if $i \ne j$ then $[E_{i,a}, F_{j,b}] = 0$.
\item For any $m,n \in \Z$ we have
\begin{eqnarray*}
E_{i,m}E_{i,n-1} + E_{i,n}E_{i,m-1} &=& q^2 \left( E_{i,m-1}E_{i,n} + E_{i,n-1}E_{i,m}\right) \\
F_{i,n-1}F_{i,m} + F_{i,m-1}F_{i,n} &=& q^2 \left( F_{i,n} F_{i,m-1} + F_{i,m} F_{i,n-1} \right).
\end{eqnarray*}

\item For any $m,n \in \Z$, if $\la i, j \ra = -1$ we have
\begin{eqnarray*}
E_{i,m}E_{j,n-1} + E_{j,n}E_{i,m-1} &=& q^{-1} \left( E_{j,n-1}E_{i,m} + E_{i,m-1}E_{j,n} \right) \\
F_{i,m-1}F_{j,n} + F_{j,n-1}F_{i,m-1} &=& q^{-1} \left( F_{j,n} F_{i,m-1} + F_{i,m} F_{j,n-1} \right)
\end{eqnarray*}
while if $\la i, j \ra = 0$ then
$$E_{i,m}E_{j,n} = E_{j,n}E_{i,m} \text{ and } F_{i,m}F_{j,n} = F_{j,n}F_{i,m}.$$

\item If $\la i, j \ra = -1$ then
$$\sum_{\sigma \in S_2} \left( E_{j,n}E_{i,m_{\sigma(1)}}E_{i,m_{\sigma(2)}} +  E_{i, m_{\sigma(1)}}E_{i,m_{\sigma(2)}}E_{j,n} \right) = \sum_{\sigma \in S_2} [2]E_{i,m_{\sigma(1)}}E_{j,n}E_{i,m_{\sigma(2)}}$$
and similarly if we replace all $E$s by $F$s.
\end{enumerate}
\begin{prop}\label{prop:main}
The Drinfeld and vertex realizations of quantum affine algebras are equivalent. 
\end{prop}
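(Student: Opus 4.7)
The plan is to identify both realizations as two different choices of generators for the same algebra. Since we work in characteristic zero, the exponential formulas defining $P_i^{(n)}, Q_i^{(n)}, P_i^{(1^n)}, Q_i^{(1^n)}$ in terms of the Heisenberg generators $h_{i,\pm n}$ are invertible: taking the formal logarithm recovers each $h_{i,\pm n}$ as a polynomial in the $P$'s and $Q$'s, and vice versa. Combined with the rescaling \eqref{eq:rescale}, this produces a bijection between the two sets of generators inside the underlying associative algebra. The remaining task is to check that each set of relations is a consequence of the other under this substitution.

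I would then verify the vertex relations (1)--(10) one at a time as coefficient-wise expansions of the corresponding Drinfeld relations. Relations (1), (2), (3), and (5) translate immediately because the $P$'s and $Q$'s are polynomial combinations of pairwise commuting $h_{i,\pm n}$'s, and the grading shift by $q^d$ is determined on exponentials by its action on each $h_{i,m}$. Relations (8), (9), (10) are obtained by reading off coefficients of $z^m w^n$ in Drinfeld's (8), (9), (10), with \eqref{eq:rescale} absorbing the powers of $q^{c/2}$ that would otherwise appear. For (4), I would substitute the exponential definitions into Drinfeld's (4) relating $\psi_i^-(z)\psi_j^+(w)$ and expand the rational function as a power series in $w/z$; the coefficients $\Sym^k([2][c])$ and $\Lambda^k([c])$ appear naturally as graded dimensions of symmetric and exterior powers of one- and two-dimensional graded vector spaces. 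Relation (7) is the coefficient expansion of Drinfeld (7), and the case split on the sign of $a+b$ reflects the disjoint support of $\delta(q^c w z^{-1})$ and $\delta(q^c z w^{-1})$ as formal series.

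The main obstacle is relation (6), since the particular combinations $Q_i^{[1^n]}$ and $P_i^{[1^n]}$ that appear there are not obvious. My approach would be to form the generating function
\[
G_i(z) := \sum_{n \ge 0} Q_i^{[1^n]} z^n
\]
and re-express it explicitly in terms of $\psi_i^+(z)$ and a shift of it. The point of the particular linear combination $\sum_m (-q)^m [m] Q_i^{(1^{n-m})} Q_i^{(m)}$ is that $G_i(z)$ acquires a clean product form which, when commuted past $x_i^+(w)$ using Drinfeld (6), produces the two-term expression $q^2 Q_i^{[1^a]} E_{i,b+1} - q^{-2} E_{i,b+1} Q_i^{[1^a]}$ on the right-hand side, together with the boundary term $[2] E_{i,b+1}$ at $a=0$ coming from the lowest-order coefficient. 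The analogous generating function manipulation for $P_i^{[1^n]}$, together with the parameter change $\la i, j \ra \in \{-1, 0\}$ in Drinfeld (6), yields the remaining commutators in (6) uniformly. Once relation (6) is verified, the proposition follows.
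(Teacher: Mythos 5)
Your proposal is correct and follows essentially the same approach as the paper: every relation except the $P$--$Q$ commutation (4) is a direct coefficient-wise translation of the corresponding Drinfeld relation, and the generating-function identity you propose for condition (6), expressing $\sum_n Q_i^{[1^n]}z^n$ through $\psi_i^+$, is exactly the identity (equation (\ref{eq:psi+})) that the paper derives when writing out conditions (6) and (7). For condition (4) the paper simply cites \cite{CL1} (the level-one case) together with the Heisenberg relation $[h_{i,m},h_{j,n}]=\delta_{m,-n}[\la i,j \ra n]\frac{[nc]}{n}$ for general $c$; your log/exp dictionary amounts to the same computation, though note that the expansion coefficients $\Sym^k([2][c])$ and $\Lambda^k([c])$ arise from graded vector spaces of dimensions $2c$ and $c$ (not one and two) once the level exceeds one, and they come from re-exponentiating the Heisenberg relation with the $1/[n]$ normalization rather than from a direct expansion of Drinfeld's rational function.
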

\begin{proof}
All but one of the relations in the vertex realization are obtained directly from the Drinfeld realization by writing out the condition. The only exception is condition (4) involving the commutation of $P$s and $Q$s. The fact that it is equivalent to condition (4) in the Drinfeld realization was checked in \cite{CL1} when $c=1$ (i.e. in the level one case). The same proof extends without complications to an arbitrary $c \in \N$ using the relation 
$$[h_{i,m}, h_{j,n}] = \delta_{m,-n} [\la i,j \ra n] \frac{[nc]}{n}.$$
\end{proof}

\subsection{The idempotent realization}\label{sec:idempotent}

Any representation $V= \oplus_{\l\in \hX} V(\l)$ of $U_q(\hg)$ with a weight space decomposition has a natural collection of idempotent endomorphisms, namely, for each $\l \in \hX$ there is the endomorphism given by projection onto the weight space $V(\l)$. It is therefore natural to consider an idempotent modification $\dU_q(\hg)$ of the quantum affine algebra, with the unit replaced by this collection of idempotent endomorphisms, one for each weight $\l \in \hX$.  Then giving a representation of $\dU_q(\hg)$ will be equivalent to giving a representation of $U_q(\hg)$ together with a weight space decomposition.  This point of view is used frequently in the literature on Kac-Moody categorification, since, so far at least, all categorified representations have a weight space decomposition.

For any $\l \in \hX$ denote by $\1_\l$ the idempotent which projects onto this weight space $\l$. We also fix $c$ to be a positive integer.  We define the algebra $\dU_q(\hg)$ via generators and relations as follows.

The generators are
$$E_{i,r} \1_\l, F_{i,r} \1_\l, Q_i^{(n)} \1_\l, P_i^{(n)} \1_\l, Q_i^{(1^n)} \1_\l, P_i^{(1^n)} \1_\l, \text{ where } i \in \I, \l \in \hX \text{ and } r,n \in \Z.$$
Note that we no longer have generators $q^h$, $q^{\pm d}$ or $q^{\pm c/2}$.
\begin{enumerate}
\item This condition is redundant
\item $\{\1_\l: \l \in \hX \}$ are mutually orthogonal idempotents, moreover
\begin{eqnarray*}
& & E_{i,r} \1_\l = \1_\mu E_{i,r} \1_\l = \1_\mu E_{i,r} \\
& & F_{i,-r} \1_\mu = \1_\l F_{i,-r} \1_\mu = \1_\l F_{i,-r}
\end{eqnarray*}
where $\mu = \l+ \alpha_i + rc \delta$.
\item Same as the corresponding vertex realization relation.
\item Same as the corresponding vertex realization relations (equations (\ref{eq:PQ1}) and (\ref{eq:PQ2})).
\item We have
\begin{eqnarray*}
P_i^{(n)} \1_\l = \1_\mu P_i^{(n)} \1_\l = \1_\mu P_i^{(n)} &\text{ and }& P_i^{(1^n)} \1_\l = \1_\mu P_i^{(1^n)} \1_\l = \1_\mu P_i^{(1^n)} \\
Q_i^{(n)} \1_\mu = \1_\l Q_i^{(n)} \1_\mu = \1_\l Q_i^{(n)} &\text{ and }& Q_i^{(1^n)} \1_\mu = \1_\l Q_i^{(1^n)} \1_\mu = \1_\l Q_i^{(1^n)}
\end{eqnarray*}
where $\mu = \l + nc \delta$.
\item Same as the corresponding vertex realization relation.
\item We have
$$[E_{i,a}, F_{i,b}] \1_\l =
\begin{cases}
q^{ac} q^{\la \l, i \ra} Q_i^{[1^{a+b}]} \1_\l \text{ if } a+b > 0 \\
q^{bc} q^{- \la \l, i \ra} P_i^{[1^{-a-b}]} \1_\l \text{ if } a+b < 0 \\
[\la \l, i \ra + ac] \1_\l \text{ if } a+b=0
\end{cases}$$
while if $i \ne j$ then $[E_{i,a}, F_{j,b}] \1_\l = 0$.
\item Same as the corresponding vertex realization relation.
\item Same as the corresponding vertex realization relation.
\item Same as the corresponding vertex realization relation.
\end{enumerate}

Now suppose that $V$ is a representation of $\hg$ with weight space decomposition $V = \oplus_{\l \in \hX} V(\l)$. We say that $V$ is an {\bf integrable representation} if for any $\l \in \hX$ and root $\alpha \in \hY$ the weight space $V(\l+n \alpha)$ is zero for $n \gg 0$ and $n \ll 0$. In an integrable highest weight representation of $\hg$, the central element $c$ will act as $\la \l, \delta \ra \id$, where $\l$ is the highest weight.  The integer $\la \l, \delta \ra$ is called the {\bf level} of the representation. 

\subsection{Redundancy in relations}\label{sec:minimal}

Many of the relations in the presentation of $\dU_q(\hg)$ above turn out to be redundant. We now summarize a smaller set of relations.

The generators are the same as those of the previous section. However, it suffices to consider the following smaller set of relations.
\begin{enumerate}
\item $\{\1_\l: \l \in \hX \}$ are mutually orthogonal idempotents with
\begin{eqnarray*}
& & E_{i,r} \1_\l = \1_\mu E_{i,r} \1_\l = \1_\mu E_{i,r} \\
& & F_{i,-r} \1_\mu = \1_\l F_{i,-r} \1_\mu = \1_\l F_{i,-r}
\end{eqnarray*}
where $\mu = \l+ \alpha_i + rc \delta$
\item The $P$s and $Q$s satisfy the same relations as before (conditions (4) and (5) above). 
\item We have
$$[E_{i,a}, F_{i,b}] \1_\l =
\begin{cases}
q^{ac} q^{\la \l, i \ra} Q_i^{[1^{a+b}]} \1_\l \text{ if } a+b > 0 \\
q^{bc} q^{- \la \l, i \ra} P_i^{[1^{-a-b}]} \1_\l \text{ if } a+b < 0 \\
[\la \l, i \ra + ac] \1_\l \text{ if } a+b=0
\end{cases}$$
while if $i \ne j$ then $[E_{i,a}, F_{j,b}] \1_\l = 0$.
\item For any $m,n \in \Z$ we have $E_{i,n}E_{i,n-1}\1_\l = q^2 E_{i,n-1}E_{i,n}\1_\l$ and $F_{i,n-1}F_{i,n}\1_\l = q^2 F_{i,n}F_{i,n-1}\1_\l$.
\item For any $m,n \in \Z$ we have
\begin{eqnarray*}
E_{i,1}E_j\1_\l + E_{j,1}E_i\1_\l = q^{-1} \left( E_jE_{i,1}\1_\l + E_iE_{j,1}\1_\l \right) &\text{ if }& \la i, j \ra = -1 \\
E_{i,m}E_{j,n}\1_\l = E_{j,n}E_{i,m}\1_\l &\text{ if }& \la i, j \ra = 0
\end{eqnarray*}
and similarly
\begin{eqnarray*}
F_{i,-1}F_j \1_\l + F_{j,-1}F_i \1_\l = q^{-1} \left( F_jF_{i,-1} \1_\l + F_iF_{j,-1}\1_\l \right) &\text{ if }& \la i, j \ra
 = -1 \\
F_{i,m}F_{j,n} \1_\l = F_{j,n}F_{i,m} \1_\l &\text{ if }& \la i, j \ra = 0.
\end{eqnarray*}
\item \label{co:serre} If $\la i, j \ra = -1$ then
$$(E_{j,n})(E_{i,m})^2 \1_\l + (E_{i,m})^2 (E_{j,n}) \1_\l = [2](E_{i,m})(E_{j,n})(E_{i,m}) \1_\l.$$
and similarly if we replace all $E$s by $F$s.
\end{enumerate}

\begin{Theorem}\label{thm:main} 
The relations above imply all of the relations in Drinfeld's realization. Moreover, when acting on an integrable representation, condition (\ref{co:serre}) is not necessary, since it follows formally from the other relations. 
\end{Theorem}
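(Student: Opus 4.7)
The overall plan is to derive all the Drinfeld (equivalently, vertex) relations from the minimal list in a bootstrapping manner, with the key conversion device being the $[E,F]$ commutator of minimal condition (3), which identifies $Q_i^{[1^k]}$ and $P_i^{[1^k]}$ with explicit $EF$-commutators.

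First, I would derive the commutation relations between $Q_j^{[1^{a+1}]}, P_j^{[1^{a+1}]}$ and $E_{i,b}, F_{i,b}$, i.e.\ condition (6) of the vertex realization, which is conspicuously absent from the minimal list. The key identity is
$$Q_i^{[1^{a+b}]} \1_\l = q^{-ac} q^{-\la \l, i \ra} [E_{i,a}, F_{i,b}] \1_\l \text{ for } a+b > 0,$$
which rewrites any $Q^{[1^k]}$ as an $EF$-commutator with a freely chosen splitting $k = a + b$. To compute $[Q_j^{[1^{a+1}]}, E_{i,c}] \1_\l$, one rewrites the $Q$ as $[E_{j,*}, F_{j,*}]$ and applies the Jacobi identity; the resulting expressions $[E_{j,*}, E_{i,c}]$ and $[E_{i,c}, F_{j,*}]$ can be evaluated using minimal (4) and (5), after choosing the splitting so that only adjacent or base-case indices appear, together with minimal (3) itself. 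Matching $q$-powers and signs against the $q$-symmetrized bracket notation defining $Q^{[1^{a+1}]}$ yields the asserted vertex relation; the analogous argument with $P$'s and $F$'s handles the remaining three cases.

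Second, I would propagate the diagonal and cross EE/FF relations from their minimal adjacent/base forms to arbitrary $m,n$. For the diagonal case (minimal (4) to vertex (8)), I proceed by induction on $|m-n|$: starting from $E_{i,n}E_{i,n-1}\1_\l = q^2 E_{i,n-1}E_{i,n}\1_\l$, commute both sides with the index-raising operator $Q_i^{[1^1]}$ (or the lowering operator $P_i^{[1^1]}$) using the commutation rules derived in step one, to produce the relation at $|m-n|+1$. The cross case (minimal (5) to vertex (9)) is handled the same way, starting from $m,n \in \{0,1\}$ and propagating by commuting with appropriate $Q_j^{[1^1]}$'s. The full vertex Serre relation (vertex (10)) is then derived from the minimal Serre relation (6) by the same commutation-and-shift method.

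For the final claim — that on an integrable representation, minimal condition (6) is superfluous — I would apply the standard Kac-Moody/$\sl_2$ argument. Let $X_{m,n,\l}$ denote the difference of the two sides of the proposed Serre relation at indices $m,n$ and weight $\l$. Using only relations \emph{outside} (6), in particular minimal (3), (5) and the already-derived (8), (9), compute that $[F_{k,r}, X_{m,n,\l}] = 0$ for all $k, r$; this reduces to the quantum Serre identity inside the $U_q(\sl_2)$-type subalgebra generated by $E_i, F_i$. Then integrability forces the image of $X_{m,n,\l}$ to lie in weight spaces bounded out of range, hence to vanish, and commutation with all $F$'s propagates this vanishing to the whole module. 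The main obstacle I anticipate is step one: the bracket operators $Q_i^{[1^{a+1}]}$ are themselves $q$-symmetrized sums $\sum_m (-q)^m [m] Q_i^{(1^{a+1-m})} Q_i^{(m)}$, so extracting them cleanly from Jacobi-expanded $EF$-commutators requires delicate bookkeeping of $q$-powers. A secondary obstacle is verifying non-circularity of the integrability argument — every input used in computing $[F, X_{m,n,\l}]$ must be derivable from the minimal list \emph{without} appealing to the minimal Serre relation (6).
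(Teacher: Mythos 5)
Your proposal follows essentially the same route as the paper's proof: the paper likewise derives the commutation relations between $Q_i^{[1^k]}, P_i^{[1^k]}$ and the $E$'s and $F$'s (vertex condition (6)) by rewriting the $Q$'s and $P$'s as $[E,F]$-commutators via minimal condition (3), propagates the $EE$/$FF$ relations (8)--(9) from their adjacent/base cases by commuting with $P_i, Q_i$ (and $P_j, Q_j$), reduces the Serre relation (10) to the equal-index case, and handles that case on integrable representations by the standard lowest-weight argument applied to the $\sl_3$-type subalgebra generated by $E_{i,m}, E_{j,n}, F_{i,-m}, F_{j,-n}$. The only divergences are minor: the paper reduces (10) by an explicit manipulation using relation (9) rather than by $P/Q$-commutation, and it resolves the circularity you flag (its proof of the $a>0$ case of (6) invokes the full relation (8)) with an explicit remark fixing the order of deductions rather than by choosing the splitting to keep all indices adjacent.
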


\section{Proof of theorem \ref{thm:main}}

We need to show that the relatons in $\dU_q(\hg)$ follow from the relations in Section \ref{sec:minimal}. Conditions (1), (2), (3) and (5) are easy to check. We will verify the rest of the relations. For simplicity we will often omit the projectors $\1_\l$ and write $q^{h_i}$ instead of $q^{\la \l, i \ra} \1_\l$. 

\subsection{Proof of (7)}
If $i \ne j$ then this condition states that $[x_i^+(z), x_j^-(w)] = 0$ which means that $E_{i,a}$ and $F_{j,b}$ commute (as claimed).

We now deal with the case $i=j$. To simplify notation we drop the $i$ subscripts everywhere. So $h_{i,n}$ becomes $h_n$, $\psi^\pm_i(z)$ is just $\psi^\pm(z)$ and so on. Condition (5) then becomes
\begin{equation}\label{eq:1}
[x^+(z), x^-(w)] = \frac{1}{q-q^{-1}} \left( \delta(q^c wz^{-1}) \psi^+(q^{\frac {c}{2}}w) - \delta(q^c zw^{-1}) \psi^-(q^{\frac{c}{2}} z) \right)
\end{equation}
where
\begin{eqnarray*}
\psi^+(z) &=& \sum_{n \ge 0} \psi^+(n) z^{-n} = q^h \exp \left( (q-q^{-1}) \sum_{n \ge 1} h_n z^{-n} \right) \\
\psi^-(z) &=& \sum_{n \ge 0} \psi^-(-n) z^n = q^{-h} \exp \left( -(q-q^{-1}) \sum_{n \ge 1} h_{-n} z^n \right).
\end{eqnarray*}
Now, the coefficient of $z^{-a}w^{-b}$ of left side of equation (\ref{eq:1}) is $(-q)^{-a-b} q^{-(a+b)c/2}[E_a, F_b]$ (recall that we have rescaled using equation (\ref{eq:rescale})). On the other hand, the coefficient of $z^{-a}w^{-b}$ on the right side of (\ref{eq:1}) is 
$$\frac{1}{q-q^{-1}} \left( q^{ac} (q^{\frac{c}{2}})^{-a-b} \psi^+(a+b) - q^{bc} (q^{\frac{c}{2}})^{-a-b} \psi^-(a+b) \right).$$
Notice that if $a+b < 0$ then the first term above vanishes while if $a+b > 0$ then the second term vanishes.

{\bf Case $a+b=0$.} As expected, we end up with
$$[E_a, F_{-a}] = \frac{1}{q-q^{-1}} \left( q^{ac} q^h  - q^{-ac} q^{-h} \right)
.$$
{\bf Case $a+b>0$.} Expanding out everything we get
$$[E_a, F_b] = \frac{(-q)^{a+b}}{q-q^{-1}} q^{ac} \psi^+(a+b).$$
Now consider
\begin{eqnarray*}
\phi^+(z) &:=& \sum_{n \ge 0} Q^{(n)} z^n = \exp \left( \sum_{n \ge 1} \frac{h_n}{[n]} z^n \right) \\
\phi^-(z) &:=& \sum_{n \ge 0} (-1)^n Q^{(1^n)} z^n = \exp \left( - \sum_{n \ge 1} \frac{h_n}{[n]} z^n \right)
\end{eqnarray*}
First notice that
$$q^h \phi^+(qz^{-1}) \phi^-(q^{-1}z^{-1}) = q^h \exp \left( \sum_{n \ge 1} \frac{h_n}{[n]} (q^n - q^{-n}) z^{-n} \right) = \psi^+(z).$$
On the other hand, we also have
\begin{eqnarray*}
\phi^+(qz^{-1}) - \phi^+(q^{-1}z^{-1})
&=& \sum_{n \ge 0} (q^n - q^{-n}) Q^{(n)} z^{-n} \\
&=& (q-q^{-1}) \sum_{n \ge 0} [n] Q^{(n)} z^{-n}.
\end{eqnarray*}
If we multiply this by $\phi^-(q^{-1} z^{-1})$ and use that $\phi^+ \phi^- = 1$ we get
\begin{eqnarray*}
\phi^+(qz^{-1}) \phi^-(q^{-1} z^{-1}) - 1
&=& (q-q^{-1}) \sum_{n \ge 0} \sum_{m=0}^n [m]Q^{(m)} (-q)^{m-n} Q^{(1^{n-m})} z^{-n} \\
&=& (q-q^{-1}) \sum_{n \ge 0} (-q)^{-n} Q^{[1^n]} z^{-n}
\end{eqnarray*}
which means that if $a+b > 0$ we have
\begin{equation}\label{eq:psi+}
\frac{(-q)^{a+b}}{q-q^{-1}} \psi^+(a+b) = q^h Q^{[1^{a+b}]}.
\end{equation}
Thus we get that
$$[E_a, F_b] = q^{ac} q^h Q^{[1^{a+b}]}.$$

{\bf Case $a+b<0$.} Expanding out we get
$$[E_a, F_b] = - \frac{(-q)^{a+b}}{q-q^{-1}} q^{-ac} \psi^-(a+b).$$
Now consider
\begin{eqnarray*}
\phi^+(z) &:=& \sum_{n \ge 0} P^{(n)} z^n = \exp \left( \sum_{n \ge 1} \frac{h_{-n}}{[n]} z^n \right) \\
\phi^-(z) &:=& \sum_{n \ge 0} (-1)^n P^{(1^n)} z^n = \exp \left(-\sum_{n \ge 1} \frac{h_{-n}}{[n]} z^n \right)\end{eqnarray*}
First we have
$$q^{-h} \phi^+(q^{-1}z) \phi^-(qz) = q^{-h} \exp \left( \sum_{n \ge 1} \frac{h_{-n}}{[n]} (q^{-n} - q^{n}) z^n \right) = \psi^-(z).$$
On the other hand, we also have
\begin{eqnarray*}
\phi^+(q^{-1}z) - \phi^+(qz)
&=& \sum_{n \ge 0} (q^{-n} - q^{n}) P^{(n)} z^n \\
&=& - (q-q^{-1}) \sum_{n \ge 0} [n] P^{(n)} z^n.
\end{eqnarray*}
If we multiply this by $\phi^-(qz)$ and use that $\phi^+ \phi^- = 1$ we get
\begin{eqnarray*}
\phi^+(q^{-1}z) \phi^-(qz) - 1
&=& - (q-q^{-1}) \sum_{n \ge 0} \sum_{m=0}^n [m] P^{(m)} (-q)^{n-m} P^{(1^{n-m})} z^{n} \\
&=& - (q-q^{-1}) \sum_{n \ge 0} (-q)^n P^{[1^n]} z^{n}
\end{eqnarray*}
which means that if $a+b < 0$ we have
\begin{equation}\label{eq:psi-}
- \frac{(-q)^{a+b}}{q-q^{-1}} \psi^-(a+b) = q^{-h} P^{[1^{-a-b}]}.
\end{equation}
Thus we get that
$$[E_a, F_b] = q^{bc} q^{-h} P^{[1^{-a-b}]}.$$

\subsection{Proof of (6)} 

If we write out relation (6) from section \ref{sec:drinfeld} when $s=+$ and $i=j$ we get two relations. In the first relation (where we take $\psi_i^+$) consider the coefficient of $z^{-a}w^{-b}$ to obtain
$$q^{c/2} \psi_i^+(a+1) e_{i,b} - q^2 \psi_i^+(a) e_{i,b+1} = q^2 q^{c/2} e_{i,b} \psi_i^+(a) - e_{i,b+1} \psi_i^+(a).$$
We show in the proof of relation (7) that if $\ell > 0$ then
$$\psi_i^+(\ell) = (q-q^{-1}) q^h Q_i^{[1^\ell]}$$
(see equation (\ref{eq:psi+})) while $\psi_i^+(0) = q^h$. Thus, if $a > 0$ then we get (after simplifying and using (\ref{eq:rescale}))
$$q^c Q_i^{[1^{a+1}]} E_{i,b} - q^2 Q_i^{[1^a]} (E_{i,b+1}) = q^c E_{i,b} Q_i^{[1^{a+1}]} - q^{-2} E_{i,b+1} Q_i^{[1^a]}$$
while if $a=0$ we get
$$q^c (q-q^{-1}) Q_i E_{i,b} - q^2 E_{i,b+1} = q^c (q-q^{-1}) E_{i,b} Q_i - q^{-2} E_{i,b+1}.$$
Thus, we end up with
$$q^c [Q_i^{[1^{a+1}]}, E_{i,b}] =
\begin{cases}
q^2 Q_i^{[1^a]} (E_{i,b+1}) - q^{-2} (E_{i,b+1}) Q_i^{[1^a]} \text{ if } a > 0 \\
[2](E_{i,b+1}) \text{ if } a = 0.
\end{cases}$$
Now we will show that this is a consequence of the other relations. To simplify notation we will temporarily write $E_m$ for $E_{i,m}$ and $F_m$ for $F_{i,m}$ and $Q$ instead of $Q_i$.

{\bf Case $a=0$.} First we prove the case $a=0$ which says that $q^c [Q,E_b] = [2] E_{b+1}$. To do this we only use the two relations 
$$[E_{b+1}, F_{-b}] = q^{(b+1)c} q^h Q \text{ and } E_{b+1} E_b = q^2 E_b E_{b+1}.$$
We have
\begin{eqnarray*}
q^h q^c [Q,E_b] 
&=& q^c (q^h QE_b - q^2 E_b q^h Q) \\
&=& q^c q^{-(b+1)c} \big( E_{b+1}F_{-b}E_b - F_{-b}E_{b+1}E_b - q^2 E_bE_{b+1}F_{-b} + q^2 E_bF_{-b}E_{b+1} \big) \\
&=& q^{-bc} \big( E_{b+1}E_bF_{-b} - E_{b+1} [\ell+bc] - q^2F_{-b}E_bE_{b+1} - E_{b+1}E_bF_{-b} \\ 
& & + q^2F_{-b}E_bE_{b+1} + q^2 E_{b+1}[\ell+bc+2] \big) \\
&=& q^{-bc} \big( E_{b+1}(q^{\ell+bc+3} + q^{\ell+bc+1} \big) \\
&=& q^h [2] E_{b+1}
\end{eqnarray*}
where $\ell := \la \l, \alpha_i \ra$ and $\l$ is the weight space on the far right ({\it i.e.} the domain). Cancelling the $q^h$ completes the proof.

{\bf Case $a > 0$.} Here we need to show that 
\begin{equation}\label{eq:15}
q^{c} [Q^{[1^{a+1}]}, E_b] = q^2 Q^{[1^a]} E_{b+1} - q^{-2} E_{b+1} Q^{[1^a]}.
\end{equation}
We begin by computing $q^h$ times the left side of (\ref{eq:15}). We have 
\begin{eqnarray*}
q^h \cdot (LS) 
&=& q^{c}(q^h Q^{[1^{a+1}]} E_b - q^2 E_b q^h Q^{[1^{a+1}]}) \\
&=& q^{c} q^{-(a+b+1)c} \left( [E_{a+b+1}, F_{-b}] E_b - q^2 E_b [E_{a+b+1}, F_{-b}] \right) \\
&=& q^{-(a+b)c} \big( E_{a+b+1} E_b F_{-b} - E_{a+b+1} [\ell+bc] - F_{-b} E_{a+b+1} E_b \\
& & - q^2 E_b E_{a+b+1} F_{-b} + q^2 F_{-b} E_b E_{a+b+1} + q^2 E_{a+b+1} [\ell+2+bc] \big) \\
&=& q^{-(a+b)c} \big( (-E_{b+1} E_{a+b} + q^2 E_{a+b} E_{b+1}) F_{-b} + F_{-b}(E_{b+1} E_{a+b} - q^2 E_{a+b} E_{b+1}) \\ 
& & + q^{\ell+2+bc} [2] E_{a+b+1} \big)
\end{eqnarray*}
where $\ell$ is as above. Here we used that $[E_m, F_n] = q^{mc} q^h Q^{[1^{m+n}]}$ to obtain the second equality (where we take $m=a+b+1$ and $n=-b$), we use the standard relation for $[E_b,F_{-b}]$ to get the third equality, and we use the relation
\begin{equation}\label{eq:16}
E_m E_{n-1} + E_n E_{m-1} = q^2 ( E_{m-1} E_n + E_{n-1} E_m )
\end{equation}
to conclude the last equality.
\begin{Remark}
This relation appears as condition (8) in the vertex or idempotent realizations and is proved in the next subsection. The argument we employ is not circular because in that proof we only use the fact that $[P_i,E_{i,n}] = -[2](E_{i,n-1})$ which is the case $a=0$ proved above. To deal with this case we only use that $E_{b+1}E_b = q^2 E_bE_{b+1}$ which is one of the relations included in the definition in section \ref{sec:minimal}.  
\end{Remark}

Similarly, we compute $q^h$ times the right side of (\ref{eq:15}). We have 
\begin{eqnarray*}
q^h \cdot (RS)
&=& q^2 q^h Q^{[1^a]} E_{b+1} - E_{b+1} q^h Q^{[1^a]} \\
&=& q^{-(a+b)c} \big[ q^2(E_{a+b} F_{-b} - F_{-b} E_{a+b})E_{b+1} - E_{b+1}(E_{a+b} F_{-b} - F_{-b} E_{a+b}) \big] \\
&=& q^{-(a+b)c} \big[ q^2(E_{a+b}E_{b+1}F_{-b} - E_{a+b} q^{(b+1)c} q^h Q - F_{-b} E_{a+b} E_{b+1}) \\
& & (-E_{b+1} E_{a+b} F_{-b} + F_{-b} E_{b+1} E_{a+b} + q^{(b+1)c} q^h Q E_{a+b}) \big].
\end{eqnarray*}
Now, using the case $a=0$ relation proved above we get
$$- q^2 E_{a+b} q^h Q + q^h Q E_{a+b} = q^h [Q, E_{a+b}] = q^{-c} q^h [2] E_{a+b+1} = q^{\ell + 2 - c} [2] E_{a+b+1}.$$
Substituting we get that 
\begin{eqnarray*}
q^h \cdot (RS) 
&=& q^{-(a+b)c} \big[ (q^2 E_{a+b}E_{b+1} - E_{b+1} E_{a+b})F_{-b} + F_{-b}(E_{b+1}E_{a+b} - q^2 E_{a+b} E_{b+1}) \\
& & + q^{\ell+2+bc} [2] E_{a+b+1} \big] \\
&=& q^h \cdot (LS)
\end{eqnarray*}
and we are done. 

There are three other cases to prove, namely:
\begin{eqnarray*}
q^{-c/2} \psi_i^+(a+1) f_{i,b} - q^{-2} \psi_i^+(a) f_{i,b+1} &=& q^{-2} q^{-c/2} f_{i,b} \psi_i^+(a) - f_{i,b+1} \psi_i^+(a) \\
q^{-c/2} \psi_i^-(-a) e_{i,b} - q^2 \psi_i^-(-a-1) e_{i,b+1} &=& q^2 q^{-c/2} e_{i,b} \psi_i^-(-a) - e_{i,
b+1} \psi_i^+(-a-1) \\
q^{c/2} \psi_i^-(-a) f_{i,b} - q^{-2} \psi_i^-(-a-1) f_{i,b+1} &=& q^{-2} q^{c/2} f_{i,b} \psi_i^-(-a) - f
_{i,b+1} \psi_i^-(-a-1).
\end{eqnarray*}
These follow in precisely the same way as the proof above. 

\subsubsection{Case 2: $\la i, j \ra = -1$}
If we write out relation (6) when $s=+$ with $\psi_j^+$ and consider the coefficient of $[z^{-a}][w^{-b}]$ we get 
$$q^{c/2} \psi_j^+(a+1) e_{i,b} - q^{-1} \psi_j^+(a) e_{i,b+1} = q^{-1} q^{c/2} e_{i,b} \psi_j^+(a+1) - e_{i,b+1} \psi_j^+(a).$$
Substituting and simplifying leads to 
$$q^c [Q_j^{[1^{a+1}]}, E_{i,b}] =
\begin{cases}
-q(E_{i,b+1}) Q_j^{[1^a]} + q^{-1} Q_j^{[1^a]} (E_{i,b+1}) \text{ if } a > 0 \\
-(E_{i,b+1}) \text{ if } a = 0.
\end{cases}$$

{\bf Case $a=0$.} We need to show that $q^c [Q_j, E_{i,b}] = - E_{i,b+1}$. To do this we will use
\begin{eqnarray}
\label{eq:A} [E_{j,b+1}, F_{j,-b}] &=& q^{bc} q^{h_j} Q_j \\
\label{eq:B} E_{i,b+1} E_{j,b} + E_{j,b+1} E_{i,b} &=& q^{-1} E_{j,b} E_{i,b+1} + q^{-1} E_{i,b} E_{j,b+1}.
\end{eqnarray}
and that $E_i$'s and $F_j$'s commute. We have
\begin{eqnarray*}
q^{h_j} \cdot (LS)
&=& q^c q^{-bc-c} \big( E_{j,b+1} F_{j,-b} E_{i,b} - F_{j,-b} E_{j,b+1} E_{i,b} \\
& & -q^{-1} E_{i,b} E_{j,b+1} F_{j,-b} + q^{-1} E_{i,b} F_{j,-b} E_{j,b+1} \big) \\
&=& q^{-bc} \big( - E_{i,b+1} E_{j,b} F_{j,-b} + F_{j,-b} E_{i,b+1} E_{j,b} \\
& & +q^{-1} E_{j,b} E_{i,b+1} F_{j,-b} - q^{-1} F_{j,-b} E_{j,b} E_{i,b+1} \big) \\
&=& q^{-bc} \big( - E_{i,b+1} [\ell_j+bc] + q^{-1} E_{i,b+1} [\ell_j-1+bc] \big) \\
&=& -q^{-bc} E_{i,b+1} q^{\ell_j+bc-1} \\
&=& - q^{h_j} E_{i,b+1} \\
&=& q^{h_j} \cdot (RS)
\end{eqnarray*}
where $\ell_j := \la \l, j \ra$ and $\l$ is the weight space on the far right ({\it i.e.} the domain). Here we used (\ref{eq:A}) to get the first equality and (\ref{eq:B}) to get the second.

{\bf Case $a>0$.} This proof is similar to the one in case 1 so we omit it.

There are also three other cases to consider, namely:
\begin{eqnarray*}
q^{-c/2} \psi_j^+(a+1) f_{i,b} - q \psi_j^+(a) f_{i,b+1} &=& q q^{c/2} f_{i,b} \psi_j^+(a+1) - f_{i,b+1} \psi_j^+(a) \\
q^{-c/2} \psi_j^-(-a) e_{i,b} - q^{-1} \psi_j^-(-a-1) e_{i,b+1} &=& q^{-1} q^{-c/2} e_{i,b} \psi_j^-(-a) - e_{i,b+1} \psi_j^-(-a-1) \\
q^{c/2} \psi_j^-(-a) f_{i,b} - q \psi_j^-(-a-1) f_{i,b+1} &=& q q^{c/2} f_{i,b} \psi_j^-(-a) - f_{i,b+1} \psi_j^-(-a-1).
\end{eqnarray*}
These follow in the same way as the proof above.

\subsubsection{Case 3: $\la i, j \ra = 0$}
Relation (6) immediately simplifies to $\psi_j^s(z)x_i^{\pm}(w) = x_i^{\pm}(w)\psi_j^s(z)$. This implies that any $P_i^{[1^a]}$ or $Q_i^{[1^a]}$ commutes with any $E_{j,b}$ or $F_{j,b}$.

\subsection{Proof of (8)}

Writing out the condition gives:
\begin{equation}\label{eq:12}
(E_{i,m})(E_{i,n-1}) + (E_{i,n})(E_{i,m-1}) - q^2 \left[ (E_{i,m-1})(E_{i,n}) + (E_{i,n-1})(E_{i,m}) \right] = 0.
\end{equation}
Let us denote the left side of this equation by $f(m,n)$. Note that it is symmetric in that $f(m,n) = f(n,m)$. Now, from condition (3) we know that
$$[P_i, E_{i,n}] = - [2] (E_{i,n-1}).$$
Multiplying (\ref{eq:12}) on the left by $P_i$ and using this relation repeatedly we get
$$[P_i, f(m,n)] = - [2] \left[ f(m-1,n) + f(m,n-1) \right].$$
Thus $f(m,n)=0 \Rightarrow f(m-1,n) + f(m,n-1) = 0$. Applying $Q_i$ instead of $P_i$ likewise gives that $f(m,n)=0 \Rightarrow f(m+1,n) + f(m,n+1) = 0$.

Thus, if you know that $f(n,n) = 0$ then $f(n-1,n)=0$ since $f(n-1,n) = f(n,n-1)$ and then $f(n,n) + f(n-1,n+1) = 0$ which means $f(n-1,n+1)=0$. Continuing in this way one finds that
$$f(n,n)=0 \Rightarrow f(n+1,n)=0 \Rightarrow f(n+1,n-1)=0 \Rightarrow f(n+2,n-1)=0 \Rightarrow f(n+2,n-2)=0 \Rightarrow \dots$$
which means that $f(n,n)=0 \Rightarrow f(n+k,n-k)=0 \text{ and } f(n+k+1,n-k)=0$ for any $k \in \Z$. Thus it suffices to know that $f(n,n)=0$ for all $n \in \Z$ which is condition (4) from \ref{sec:vertex}.

\subsection{Proof of (9)}

Now suppose $\la i, j \ra = -1$. Writing out condition (9) from section \ref{sec:drinfeld} gives
$$(E_{i,m})(E_{j,n-1}) + (E_{j,n})(E_{i,m-1}) - q^{-1} \left[ (E_{j,n-1})(E_{i,m}) + (E_{i,m-1})(E_{j,n}) \right] = 0.$$
Again, let us denote the left hand side $f(m,n)$. This time we will use that
$$[P_i, E_{i,m}] = [2] (E_{i,m-1})
\text{ and }
[P_i, E_{j,n}] = - E_{j,n-1}.$$
Now, multiplying $f(m,n)$ by $P_i$ and using these relations gives
$$[P_i, f(m,n)] = [2] f(m-1,n) + q^{-1} f(m,n-1) $$
while multiplying by $P_j$ gives
$$[P_j, f(m,n)] = - q^{-1} f(m-1,n) + [2] f(m,n-1).$$
Putting this together gives
$$f(m,n)=0 \Rightarrow f(m-1,n)=0 \text{ and } f(m,n-1)=0.$$
Using $Q_i$ and $Q_j$ instead of $P_i$ and $P_j$ also gives us
$$f(m,n)=0 \Rightarrow f(m+1,n)=0 \text{ and } f(m,n+1)=0.$$
From this it follows that we only need to know $f(m,n)=0$ for one pair $(m,n)$. Taking $(m,n)=(1,1)$ is precisely condition (9) from section \ref{sec:vertex}.

Finally, if $\la i, j \ra = 0$ then writing out the condition in (9) from section \ref{sec:drinfeld} immediately simplifies to condition (9) in section \ref{sec:vertex}.

\subsection{Proof of (10)} 
The Drinfeld condition is equivalent to
\begin{equation}\label{eq:14}
\sum_{\sigma \in S_2} \left( E_{j,n} E_{i,m_{\sigma(1)}} E_{i,m_{\sigma(2)}} +  E_{i,m_{\sigma(1)}} E_{i,m_{\sigma(2)}} E_{j,n} \right)
= \sum_{\sigma \in S_2} [2] E_{i,m_{\sigma(1)}} E_{j,n} E_{i,m_{\sigma(2)}}
\end{equation}
if $\la i, j \ra = -1$. Notice that we have
\begin{eqnarray*}
& & q^{-1} (E_{i,m_1}) (E_{j,n}) (E_{i,m_2}) \\
&=& \left[ (E_{i,m_1+1})(E_{j,n-1}) + (E_{j,n})(E_{i,m_1}) - q^{-1}(E_{j,n-1})(E_{i,m_1+1}) \right] (E_{i,m_2})
\end{eqnarray*}
and likewise\begin{eqnarray*}
& & q (E_{i,m_1}) (E_{j,n}) (E_{i,m_2}) \\
&=& (E_{i,m_1})
\left[ (E_{j,n-1})(E_{i,m_2+1}) + (E_{i,m_2})(E_{j,n}) - q(E_{i,m_2+1})(E_{j,n-1}) \right].
\end{eqnarray*}
Adding and symmetrizing with respect to $m_1$ and $m_2$ we get the Drinfeld condition above but only if we can show that
\begin{eqnarray*}
& & \sum_{\sigma \in S_2}(E_{i,m_{\sigma(1)+1}})(E_{j,n-1})(E_{i,m_{\sigma(2)}}) +(E_{i,m_{\sigma(1)}})(E_{j,n-1})(E_{i,m_{\sigma(2)}+1}) \\
&=&\sum_{\sigma \in S_2} q^{-1}(E_{j,n-1})(E_{i,m_{\sigma(1)}+1})(E_{i,m_{\sigma(2)}}) + q(E_{i,m_{\sigma(1)}})(E_{i,m_{\sigma(2)}+1})(E_{j,n-1}).
\end{eqnarray*}

Now multiply both sides of this equation by $q+q^{-1}$. If we collect the terms on the right side with a coefficient of $q^2$ or $q^{-2}$ and use the appropriate commutator relation (\ref{eq:12}) (appropriate means that we should get rid of the $q$ coefficients) then we find that the condition above is equivalent to condition (\ref{eq:14}) for the values $(n-1,m_1,m_2+1)$ and $(n-1,m_1+1,m_2)$.

Equivalently, this means that if $m_1 < m_2$ then (\ref{eq:14}) for $(n,m_1,m_2)$ is implied by $(n+1,m_1-1,m_2)$ and $(n,m_1-1,m_2+1)$. Thus repeating the argument we can reduce to the case when $m_1=m_2=m$. In this case (\ref{eq:14}) becomes
\begin{equation*}
(E_{j,n})(E_{i,m})^2 + (E_{i,m})^2 (E_{j,n}) = [2](E_{i,m})(E_{j,n})(E_{i,m}).
\end{equation*}
If $m=n=0$ then this relation follows formally from the other relations (see for instance \cite{Nak2}). But in the more general case, $E_i' := (E_{i,m})$ and $E_j' := (E_{j,n})$ also generate an $\sl_3$ action (when combined with $F_i' := (F_{i,-m})$ and $F_j' := (F_{j,-n})$) so the argument used in the case $m=n=0$ still applies.

\section{Minimal realization}\label{sec:conj}

The presentation from Section \ref{sec:minimal} can probably be stripped down even further. For generators, consider the algebra generated by $E_{i,r} \1_\l$ and $F_{i,r} \1_\l$, where $\1_\l$ are  idempotents as before and where $i \in \I \text{ and } r \in \Z$. The relations are as follows. 

\begin{enumerate}
\item $\{\1_\l: \l \in \hX \}$ are mutually orthogonal idempotents with
\begin{eqnarray*}
& & E_{i,r} \1_\l = \1_\mu E_{i,r} \1_\l = \1_\mu E_{i,r} \\
& & F_{i,-r} \1_\mu = \1_\l F_{i,-r} \1_\mu = \1_\l F_{i,-r}
\end{eqnarray*}
where $\mu = \l+ \alpha_i + rc \delta$
\item For $a,b,a',b' \in \Z$ we have 
\begin{eqnarray*}
q^{-ac} [E_{i,a}, F_{i,b}] \1_\l &=& q^{-a'c} [E_{i,a'}, F_{i,b'}] \1_\l \text{ if } a+b=a'+b' > 0 \cr
q^{-bc} [E_{i,a}, F_{i,b}] \1_\l &=& q^{-b'c} [E_{i,a'}, F_{i,b'}] \1_\l \text{ if } a+b=a'+b' < 0 \cr
[E_{i,a}, F_{i,b}] \1_\l &=& [\la \l, i \ra + ac] \1_\l \text{ if } a+b=0.
\end{eqnarray*}
If $i \ne j$ then $[E_{i,a}, F_{j,b}] \1_\l = 0$.
\item For any $n \in \Z$ we have $E_{i,n}E_{i,n-1} = q^2 E_{i,n-1}E_{i,n}$ and $F_{i,n-1}F_{i,n} = q^2 F_{i,n}F_{i,n-1}$.
\item For any $i \ne j \in \I$ and $n \in \Z$ we have
\begin{eqnarray*}
E_{i,1}E_j \1_\l + E_{j,1}E_i \1_\l = q^{-1} \left( E_jE_{i,1} \1_\l + E_iE_{j,1} \1_\l \right) &\text{ if }& \la i, j \ra = -1 \\
E_i E_{j,n} \1_\l = E_{j,n}E_i \1_\l &\text{ if }& \la i, j \ra = 0
\end{eqnarray*}
and similarly
\begin{eqnarray*}
F_{i,-1}F_j \1_\l + F_{j,-1}F_i \1_\l = q^{-1} \left( F_jF_{i,-1} \1_\l + F_iF_{j,-1} \1_\l \right) &\text{ if }& \la i, j \ra
 = -1 \\
F_i F_{j,n} \1_\l = F_{j,n} F_i \1_\l &\text{ if }& \la i, j \ra = 0.
\end{eqnarray*}
\end{enumerate}

\begin{conj}\label{conj:1} In an integrable representation, all the Drinfeld relations are a consequence of the relations above. 
\end{conj}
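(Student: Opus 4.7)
The plan is to reduce \textbf{Conjecture}~\ref{conj:1} to \textbf{Theorem}~\ref{thm:main}: if the minimal relations of Section~\ref{sec:conj} imply all the relations in the presentation of Section~\ref{sec:minimal}, then by \textbf{Theorem}~\ref{thm:main} they imply all Drinfeld relations. The starting move is to use condition~(2) of the conjecture to \emph{define}
\[
q^{h_i} Q_i^{[1^n]} \1_\l := q^{-ac}\,[E_{i,a}, F_{i,b}]\,\1_\l \qquad (a+b = n > 0),
\]
and symmetrically $q^{-h_i} P_i^{[1^n]} \1_\l := q^{-bc}\,[E_{i,a}, F_{i,b}]\,\1_\l$ for $a+b<0$; condition~(2) is exactly what guarantees these are well-defined independently of the chosen $(a,b)$. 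The individual $Q_i^{(k)}, Q_i^{(1^k)}, P_i^{(k)}, P_i^{(1^k)}$ are then extracted by inverting the generating-function identities in Section~\ref{sec:vertex}.

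The first step is to recover the commutation of $P_i, Q_i$ (and of $P_j, Q_j$ for $\la i,j \ra = -1$) with the $E$s and $F$s, i.e.\ Drinfeld relation~(6). Here the arguments already given in Section~3.2 of the proof of \textbf{Theorem}~\ref{thm:main} apply \emph{verbatim}: the $a=0$ base case $q^c[Q_i, E_{i,b}] = [2]E_{i,b+1}$ uses only the quasi-commutation $E_{i,n}E_{i,n-1} = q^2 E_{i,n-1}E_{i,n}$ and the defining $[E,F]$ identity, both of which are in the minimal list; the inductive step $a>0$ introduces no new input. The $\la i,j \ra = -1$ analogue uses only the mixed minimal relation~(4) of Section~\ref{sec:conj} in place of the single-index quasi-commutation.

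The second step is to upgrade the minimal single-index quasi-commutation and the base mixed relation to the full Drinfeld relations~(8) and~(9). This is exactly the argument of Sections~3.3 and~3.4 of the existing proof: the functions $f(m,n)$ built from these relations satisfy nice recursions under commutation with $P_i, Q_i$ (or $P_i, P_j, Q_i, Q_j$ in the mixed case), reducing the general claim to the base cases $f(n,n)=0$ and $f(1,1)=0$, which are precisely the minimal relations~(3) and~(4) of Section~\ref{sec:conj}. For the Serre relation (Drinfeld~(10)), Section~3.5 of the paper reduces it, via relation~(9), to the fixed-index Serre relation at $m_1=m_2$; in an integrable representation this follows from the corresponding $\mathfrak{sl}_3$ Serre relation acting on an integrable module, as already noted in the paper.

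The main obstacle, and plausibly the reason the statement is only a conjecture, is deriving the $\psi$-$\psi$ commutation relations (Drinfeld~(3) and~(4)), equivalently the mixed $P$-$Q$ relations (\ref{eq:PQ1})--(\ref{eq:PQ2}). Unlike everything above, these express a Heisenberg structure involving \emph{two} nested $E$-$F$ commutators and so are not visible at the level of a single $[E,F]$ bracket. The natural approach is the Jacobi identity applied to
\[
\bigl[\,[E_{i,a}, F_{i,b}],\, [E_{j,c}, F_{j,d}]\,\bigr],
\]
expanding the inner commutators using the quasi-commutations and $P/Q$-$E/F$ relations obtained in the previous steps, and then matching coefficients against the desired Heisenberg identities generating-function-by-generating-function. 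The combinatorial matching of the $\Sym^k([2][c])$ and $\Lambda^k([c])$ prefactors in (\ref{eq:PQ1})--(\ref{eq:PQ2}), starting from data where $c$ only enters through the grading shifts in minimal condition~(2), is the technical heart of the problem; integrability is expected to enter by killing boundary terms and by legitimising the arguments (via reduction to $\mathfrak{sl}_3$) in the cross-vertex cases.
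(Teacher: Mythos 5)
The statement you are proving is a \emph{conjecture}: the paper offers no proof of it. Its only evidence is Lemma \ref{lem:2}, which verifies the very first case of the Heisenberg relations ($[Q_i,P_i]\1_\l = [2][c]\1_\l$, $[Q_i,P_j]\1_\l = -[c]\1_\l$) by a direct computation with the $E$s and $F$s, after which the authors state explicitly that the general computation ``is much more complex than the one in Lemma \ref{lem:2} and we did not perform it.'' So there is no paper proof to compare against, and your attempt must stand on its own. The reduction plan you give is sound as far as it goes, and it matches the paper's evident intent: use condition (2) of Section \ref{sec:conj} to define $Q_i^{[1^{a+b}]}\1_\l$ and $P_i^{[1^{-a-b}]}\1_\l$ via the commutators $[E_{i,a},F_{i,b}]\1_\l$ (this is exactly the definition the paper writes down immediately after the conjecture), and then observe that the arguments of Section 3 proving Drinfeld (6), (8), (9), (10) never invoke the $P$--$Q$ Heisenberg relations, only the $[E,F]$ identities, the quasi-commutation relations, and the commutators of $P$s and $Q$s with $E$s and $F$s, all of which become available in the right order (your remark that the $a>0$ case of (6) ``introduces no new input'' is slightly off, since it uses Drinfeld (8), but the paper's own Remark explains why the interleaving is not circular). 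With integrability and the $\sl_3$ argument for the Serre relation, this part of your plan is legitimate.

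The gap is your final paragraph, which is not an argument but a restatement of the problem: everything that makes this statement a conjecture rather than a corollary of Theorem \ref{thm:main} is concentrated precisely there. One must show that the operators $Q_i^{[1^n]}\1_\l$, $P_i^{[1^n]}\1_\l$ defined via $[E,F]$ satisfy Drinfeld (3) and (4), equivalently conditions (3)--(4) of Section \ref{sec:vertex} (equations (\ref{eq:PQ1})--(\ref{eq:PQ2})). Note that this includes the mutual commutativity of the $Q_i^{[1^n]}$ among themselves, which you silently need even for your ``extraction by inverting generating functions'' of the individual $Q_i^{(k)}, Q_i^{(1^k)}$ to produce elements with the stated properties; so the chicken-and-egg problem appears one step earlier than you place it. Writing ``apply the Jacobi identity to $\bigl[[E_{i,a},F_{i,b}],[E_{j,c},F_{j,d}]\bigr]$ and match coefficients'' names a strategy, not a proof: no computation is performed, no induction structure is set up, and there is no indication of how the $\Sym^k([2][c])$ and $\Lambda^k([c])$ prefactors would emerge from data in which $c$ enters only through grading shifts, nor of where integrability is genuinely used. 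This is exactly the computation the authors say they did not perform. Until it is carried out, the conjecture remains open, and your proposal does not close it.
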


The most glaring omission above are the $P$s and $Q$s. These can be defined using the $E$s and $F$s as follows
$$P_i^{[1^{-a-b}]} \1_\l := q^{-ac} q^{-\la \l,i \ra} [E_{i,a}, F_{i,b}] \1_\l \text{ and } Q_i^{[1^{a+b}]} \1_\l := q^{-bc} q^{\la \l,i \ra} [E_{i,a}, F_{i,b}] \1_\l.$$

\begin{lemma}\label{lem:2}
Let $P_i$ and $Q_i$ be defined by $-q^{-1}P_i=P_i^{[1]}$ and $-qQ_i=Q_i^{[1]}$.  The relations between $E_{i,a}$ and $F_{i,b}$ imply that $[Q_i,P_i] \1_\l = [2][c] \1_\l$, $[Q_i,P_j] \1_\l =-[c] \1_\l$ if $\la i, j \ra = -1$, and $[Q_i,P_i] \1_\l = 0$ otherwise. 
\end{lemma}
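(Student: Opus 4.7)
The plan is to express $P_j$ as a rescaled $[E,F]$-commutator and then reduce the computation of $[Q_i, P_j]$ to the basic commutators $[Q_i, E_j]$ and $[Q_i, F_{j,-1}]$ via the Jacobi identity.

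Specialising the defining formula for $P_j$ at $(a,b)=(0,-1)$ gives $P_j \1_\mu = \beta_j(\mu)\,[E_j, F_{j,-1}]\1_\mu$ with an explicit scalar $\beta_j(\mu) = -q^{c+1}q^{\la \mu, j \ra}$. Since $Q_i$ shifts weight by $c\delta$ and $\la \delta, j\ra = 0$, the scalar $\beta_j$ is invariant under this shift, so
$$[Q_i, P_j]\1_\l \;=\; \beta_j(\l)\,\Bigl(\bigl[[Q_i, E_j], F_{j,-1}\bigr] + \bigl[E_j, [Q_i, F_{j,-1}]\bigr]\Bigr)\1_\l$$
by the Jacobi identity. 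The inner commutators $[Q_i, E_j]$ and $[Q_i, F_{j,-1}]$ are the $a=0$ specialisations of relation (6) in the vertex realization of Section \ref{sec:vertex}. Inspection of the proof of that condition in Subsection 3.2 of this paper shows that the derivation uses only (i) the quadratic relations $E_{i,n}E_{i,n-1} = q^2 E_{i,n-1}E_{i,n}$ and their $F$-analogues, (ii) the mixed Serre relation $E_{i,1}E_j + E_{j,1}E_i = q^{-1}(E_jE_{i,1} + E_iE_{j,1})$ when $\la i, j \ra = -1$ together with the commutativity $E_iE_{j,n} = E_{j,n}E_i$ when $\la i, j \ra = 0$, (iii) the cross-index vanishing $[E_i, F_j] = 0$, and (iv) the diagonal identity $[E_{i,a}, F_{i,b}]\1_\l = [\la \l, i\ra + ac]\1_\l$ when $a+b=0$---all of which are relations of Section \ref{sec:conj}. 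The resulting formulas are $[Q_i, E_j] = -q^{-c-1}[2]\,E_{i,1}$ when $i=j$, $=q^{-c-1}E_{j,1}$ when $\la i, j \ra = -1$, and $=0$ when $\la i, j\ra = 0$; similarly $[Q_i, F_{j,-1}]$ equals $q^{-1}[2]\,F_i$, $-q^{-1}F_j$, or $0$ in the three cases.

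Substituting these formulas into the Jacobi expansion and using the diagonal identity again, namely $[E_{j,1}, F_{j,-1}]\1_\l = [\la\l, j\ra + c]\1_\l$ and $[E_j, F_j]\1_\l = [\la\l,j\ra]\1_\l$, reduces each case to a short $q$-integer manipulation. The case $\la i, j\ra = 0$ is immediate, both inner commutators vanishing. For $i = j$, one obtains $\beta_i(\l) \cdot [2] \cdot (q^{-1}[\la\l,i\ra] - q^{-c-1}[\la\l,i\ra + c])\1_\l$, and the identity $q^{-1}[\ell] - q^{-c-1}[\ell + c] = -q^{-\ell - c - 1}[c]$ combines this with $\beta_i(\l) = -q^{c+1}q^{\la\l,i\ra}$ to yield $[2][c]\1_\l$. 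The case $\la i, j\ra = -1$ is analogous, with the mirror identity $q^{-c-1}[\ell + c] - q^{-1}[\ell] = q^{-\ell-c-1}[c]$ producing $-[c]\1_\l$ after multiplication by $\beta_j(\l)$.

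The main obstacle is the verification of the inner commutator formulas in the $\la i, j \ra = -1$ case; this requires carefully unwinding the mixed Serre relation together with $[E_i, F_j] = 0$ in order to move $E_{i,1}$ past $E_j$ (and likewise for $F_{j,-1}$), so that the non-trivial terms collapse into the claimed multiples of $E_{j,1}$ and $F_j$. Once these are in hand, the remainder is routine $q$-algebra.
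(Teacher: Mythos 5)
Your proof is correct, but it follows a genuinely different route from the paper's. The paper proves the $i=j$ case by a self-contained brute-force computation: it writes both $Q_i$ and $P_i$ as rescaled brackets, $Q_i \sim [E_i,F_{i,1}]$ and $P_i \sim [E_{i,-1},F_i]$, expands $Q_iP_i\1_\l$ into four quartic monomials in the $E$s and $F$s, and reorders them using only the diagonal relations $[E_{i,a},F_{i,b}]\1_\mu = [\la \mu,i\ra + ac]\1_\mu$ and the quadratic relations $E_{i,n}E_{i,n-1} = q^2E_{i,n-1}E_{i,n}$ (and their $F$-analogues), checking that the coefficients of $F_iE_i\1_\l$ and $E_{i,-1}F_{i,1}\1_\l$ cancel while the scalar term is $[2][c]$; the case $\la i,j\ra=-1$ is declared ``analogous but slightly longer'' and omitted. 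You instead keep only $P_j$ as a bracket and use the Jacobi identity to reduce $[Q_i,[E_j,F_{j,-1}]]$ to the commutators $[Q_i,E_j]$ and $[Q_i,F_{j,-1}]$, i.e.\ to the $a=0$ instances of condition (6), which the paper derives in Section 3 from the same stock of relations. This buys a uniform, short treatment of all three cases, including the off-diagonal one the paper skips; the price is that the lemma now leans on those external derivations, so your ``inspection'' claim is the load-bearing step. That claim does hold at the indices you need: for $i=j$ the derivation of $[Q_i,E_{i,b}]$ and $[Q_i,F_{i,b}]$ uses only the quadratic and diagonal relations plus the bracket description of $Q_i^{[1]}$, and for $\la i,j\ra = -1$ the paper's argument invokes the mixed relation at general degree $b$, but only the $b=0$ instance --- which is exactly relation (4) of Section \ref{sec:conj} --- enters the computation of $[Q_i,E_{j,0}]$ and $[Q_i,F_{j,-1}]$; no circularity arises, since those derivations never use the $P$--$Q$ commutation being proved. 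Your inner-commutator formulas and the closing $q$-integer identities all check out.

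One point to flag, which is an inconsistency of the paper rather than an error of yours: your scalar $\beta_j(\mu) = -q^{c+1}q^{\la\mu,j\ra}$ comes from the normalization $[E_{i,a},F_{i,b}]\1_\l = q^{bc}q^{-\la\l,i\ra}P_i^{[1^{-a-b}]}\1_\l$ of relation (3) in Section \ref{sec:minimal} (equivalently relation (7) of Section \ref{sec:idempotent}), whereas the display in Section \ref{sec:conj} literally reads $P_i^{[1^{-a-b}]}\1_\l := q^{-ac}q^{-\la\l,i\ra}[E_{i,a},F_{i,b}]\1_\l$, which at $(a,b)=(0,-1)$ would give $\beta_j(\mu) = -q\,q^{-\la\mu,j\ra}$. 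The Section \ref{sec:conj} display has $P$ and $Q$ interchanged relative to Section \ref{sec:minimal} and is incompatible with its own well-definedness relation (2), so your reading is the correct one. For $i=j$ the choice washes out of the commutator (which is why the paper's own computation is insensitive to it), but for $\la i,j\ra=-1$ only your normalization yields the stated $-[c]\1_\l$; the literal Section \ref{sec:conj} formulas would produce a weight-dependent multiple of $[c]\1_\l$ instead.
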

\begin{proof}
First we compute $Q_iP_i \1_\l$. To do this we use
$$P_i \1_\l = q^c q^{-\la \l, i \ra} (E_{i,-1}F_i \1_\l - F_i,E_{i,-1}) \text{ and } 
\1_\l Q_i = q^{-c} q^{\la \l,i \ra}(E_i F_{i,1} \1_\l - F_{i,1} E_i \1_\l).$$
Thus 
$$Q_iP_i \1_\l = E_iF_{i,1}E_{i,-1}F_i \1_\l - E_iF_{i,1}F_iE_{i,-1}\1_\l - F_{i,1}E_iE_{i,-1}F_i\1_\l + F_{i,1}E_iF_iE_{i,-1}\1_\l.$$
Consider the first term. We have 
\begin{align*}
& E_iF_{i,1}E_{i,-1}F_i \1_\l \\
=& E_iE_{i,-1}F_{i,1}F_i \1_\l - [\la \l-i,i \ra - c]E_iF_i \1_\l \\
=& E_{i,-1}E_iF_iF_{i,1} \1_\l - [\la \l,i \ra - 2 - c]E_iF_i \1_\l \\
=& E_{i,-1}F_iE_iF_{i,1} \1_\l + [\la \l-i,i \ra ]E_{i,-1}F_{i,1} \1_\l - [\la \l,i \ra -2-c]F_iE_i \1_\l - [\la \l,i \ra - 2 - c][\la \l,i \ra] \1_\l.
\end{align*}
Likewise, the other three terms give
\begin{equation*}\begin{split}
& -q^{-2} \left( F_iE_iE_{i,-1}F_{i,1} \1_\l - F_iE_i\1_\l [\la \l,i \ra - c] + E_{i,-1}F_{i,1}\1_\l [\la \l,i \ra] - [\la \l,i \ra ][\la \l,i \ra - c]\1_\l \right) \\
& -q^2 \left(E_{i,-1}F_{i,1}F_iE_i\1_\l + E_{i-1}F_{i,1}\1_\l[\la \l,i \ra] - [\la \l,i \ra - c]F_iE_i\1_\l - [\la \l,i \ra -c][\la \l,i \ra]\1_\l \right) \\
& F_iE_{i,-1}F_{i,1}E_i \1_\l - F_iE_i \1_\l [\la \l,i \ra +2-c] + [\la \l,i \ra + 2]E_{i,-1}F_{i,1} \1_\l - [\la \l,i \ra+2][\la \l,i \ra - c]\1_\l
\end{split}\end{equation*}
Now consider the coefficient of $F_iE_i \1_\l$ after summing up these four expressions. It equals 
$$-[\la \l,i \ra -2-c] + q^{-2}[\la \l,i \ra-c] + q^2[\la \l,i \ra-c] - [\la \l,i \ra +2-c]=0.$$
Likewise the coefficient of $E_{i,-1}F_{i,1}$ is zero. The coefficient of $\1_\l$ is 
$$-[\la \l,i \ra -2-c][\la \l,i \ra] + q^{-2}[\la \l,i \ra][\la \l,i \ra -c] + q^2[\la \l,i \ra][\la \l,i \ra -c] - [\la \l,i \ra+2][\la \l,i \ra-c]$$
which simplifies to give $[2][c]$. The remaining terms are grouped together to give $P_iQ_i \1_\l$. Thus we end up with $Q_iP_i \1_\l = P_iQ_i \1_\l + [2][c] \1_\l$. 

An analogous but slightly longer calculation, which we omit, can be used to show that $Q_iP_j \1_\l = P_j Q_i \1_\l - [c] \1_\l$ when $\la i,j \ra = -1$.  The fact that $Q_iP_j \1_\l = P_j Q_i \1_\l$ when $\la i,j \ra= 0$ is immediate.

\end{proof}

In general, the commutator relations beween $P_i^{[1^n]}$ and $Q_j^{[1^n]}$ can be read off from the Drinfeld realization. They are defined recursively as follows.
\begin{itemize}
\item If $i=j$ and $a,b \ge 0$ then
\begin{eqnarray*}
&& P_i^{[1^a]} Q_j^{[1^b]} - (q q^{-c} + q^{-1} q^c) P_i^{[1^{a+1}]} Q_j^{[1^{b+1}]} + P_i^{[1^{a+2}]} Q_j^{[1^{b+2}]} \\
&=& Q_j^{[1^b]} P_i^{[1^a]} - (q^{-1}q^{-c} + q q^c) Q_j^{[1^{b+1}]} P_i^{[1^{a+1}]} + Q_j^{[1^{b+2}]} P_i^{[1^{a+2}]}
\end{eqnarray*}
while $[Q_j^{[1^{b+1}]}, P_i^{[1]}] = -(q-q^{-1})[c] Q_j^{[1^b]}$ and $[Q_j^{[1]}, P_i^{[1^{b+1}]}] = (q-q^{-1})[c] P_i^{[1^b]}$ unless $b=0$ in which case $[Q_j^{[1]},P_i^{[1]}] = -[c]$.
\item If $\la i, j \ra = -1$ and $a,b \ge 0$ then
\begin{eqnarray*}
&& P_i^{[1^a]} Q_j^{[1^b]} - (q q^{-c} + q^{-1} q^c) P_i^{[1^{a+1}]} Q_j^{[1^{b+1}]} + P_i^{[1^{a+2}]} Q_j^{[1^{b+2}]} \\
&=& Q_j^{[1^b]} P_i^{[1^a]} - (q^{-1}q^{-c} + q q^c) Q_j^{[1^{b+1}]} P_i^{[1^{a+1}]} + Q_j^{[1^{b+2}]} P_i^{[1^{a+2}]}
\end{eqnarray*}
while $[Q_j^{[1^{b+1}]}, P_i^{[1]}] = -(q-q^{-1})[c] Q_j^{[1^b]}$ and $[Q_j^{[1]}, P_i^{[1^{b+1}]}] = (q-q^{-1})[c] P_i^{[1^b]}$ unless $b=0$ in which case $[Q_j^{[1]},P_i^{[1]}] = -[c]$.
\item If $\la i, j \ra = 0$ then $P_i^{[1^a]} Q_j^{[1^b]} = Q_j^{[1^b]} P_i^{[1^a]}$. 
\end{itemize}
One would expect that the proof in Lemma \ref{lem:2} extends to deduce these relations as a formal consequence of the commutator relations between $E$s and $F$s. However, such a computation is much more complex than the one in Lemma \ref{lem:2} and we did not perform it. 

\section{Remarks}\label{sec:remarks}

\subsection{Break of symmetry}

The algebra generated by $P$s and $Q$s is known as the quantum Heisenberg algebra. It has a natural involution $\psi$ defined by 
$$P_i^{(1^n)} \mapsto (-1)^n P_i^{(n)} \text{ and } Q_i^{(1^n)} \mapsto (-1)^n Q_i^{(n)}.$$
In terms of the generators $h_{i,m}$ this involution corresponds to $h_{i,m} \mapsto - h_{i,m}$. In section \ref{sec:vertex} we used these Heisenberg generators to define two sets of algebra elements, $\{P_i^{[1^n]}, Q_i^{[1^n]}\}$ and $\{P_i^{[n]}, Q_i^{[n]}\}$.  However, in subsequent presentations we used only $\{P_i^{[1^n]}, Q_i^{[1^n]}\}$ and never used $\{P_i^{[n]},Q_i^{[n]}\}$. This apparent break in symmetry is explained by the definition 
$$\psi_i^\pm(z) = \sum_{n \geq 0} \psi_i^\pm(\pm n) z^{\mp n} = q^{\pm h_i} \exp \big(\pm (q-q^{-1}) \sum_{n>0} h_{i, \pm n} z^{\mp n} \big)$$
from section \ref{sec:drinfeld}. Changing $h_{i,m}$ to $-h_{i,m}$ in this definition would have the effect of exchanging $P_i^{[1^n]} \leftrightarrow P_i^{[n]}$ and $Q_i^{[1^n]} \leftrightarrow Q_i^{[n]}$, because $\psi(P_i^{[1^n]}) = (-1)^n P_i^{[n]}$ and $\psi(Q_i^{[1^n]}) = (-1)^n Q_i^{[n]}$. 

\subsection{Divided powers}

To define the quantum affine algebra over $\Z[q,q^{-1}]$ one needs to include divided powers, as observed by Lusztig. These divided powers are defined as follows
\begin{equation}\label{eq:divpowers}
E_{i,m}^{(r)} := \frac{E_{i,m}^r}{[r]!} \text{ and } F_{i,n}^{(r)} := \frac{F_{i,n}^r}{[r]!}
\end{equation}
where $r \ge 0$ (by convention, $r=0$ gives the identity). Ideally one would like to understand the relations involving these divided powers; these relations should be defined over $\Z[q,q^{-1}]$, and they should arise in geometric and categorical constructions. The lemma below gives some examples of relations involving divided powers and $P$s and $Q$s. 

\begin{lemma}\label{lem:example}
The following identities hold:
\begin{eqnarray*}
[Q_i, E_{i,b}^{(r)}] = q^{r-1} [2] E_{i,b}^{(r-1)} E_{i,b+1} \ \ && \ \ [Q_i, F_
{i,b}^{(r)}] = - q^c q^{r-1} [2] F_{i,b+1} F_{i,b}^{(r-1)} \cr
[P_i, E_{i,b+1}^{(r)}] = q^{-c} q^{r-1} [2] E_{i,b} E_{i,b+1}^{(r-1)} \ \ && \ \
 [P_i, F_{i,b+1}^{(r)}] = - q^{r-1} [2] F_{i,b+1}^{(r-1)} F_{i,b} \cr
[Q_j, E_{i,b}^{(r)}] = - q^{r-1} E_{i,b}^{(r-1)} E_{i,b+1} \ \ && \ \ [Q_j, F_{i,b}^{(r)}] = q^c q^{r-1} F_{i,b+1} F_{i,b}^{(r-1)} \cr
[P_j, E_{i,b+1}^{(r)}] = - q^{-c} q^{r-1} E_{i,b} E_{i,b+1}^{(r-1)} \ \ && \ \ [P_i, F_{i,b+1}^{(r)}] = q^{r-1} F_{i,b+1}^{(r-1)} F_{i,b} 
\end{eqnarray*}
\end{lemma}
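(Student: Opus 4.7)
The plan is to prove each identity by induction on $r$, with the base case $r=1$ coming from the commutator relations established in Section \ref{sec:minimal} combined with the conventions in Lemma \ref{lem:2}, and the inductive step reducing to a single geometric sum via the commutation relations of condition (4) of Section \ref{sec:minimal}. The essential inputs are:
\begin{itemize}
\item the base commutators $[Q_i,E_{i,b}]$, $[Q_i,F_{i,b}]$, $[P_i,E_{i,b+1}]$, $[P_i,F_{i,b+1}]$, and their $j$-analogs with $\la i,j \ra=-1$, obtained by rescaling the $Q^{[1]}$- and $P^{[1]}$-versions of the identities in Section 3.2 (proof of (6), case $a=0$) via $Q_i^{[1]}=-qQ_i$, $P_i^{[1]}=-q^{-1}P_i$;
\item the commutations $E_{i,b+1}E_{i,b} = q^2 E_{i,b}E_{i,b+1}$ and $F_{i,b}F_{i,b+1}=q^2 F_{i,b+1}F_{i,b}$.
\end{itemize}

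For the inductive step of, say, the first identity, I apply the Leibniz expansion
$$[Q_i, E_{i,b}^r] \;=\; \sum_{k=0}^{r-1} E_{i,b}^{k}\,[Q_i, E_{i,b}]\,E_{i,b}^{r-1-k}.$$
Substituting the base case $[Q_i,E_{i,b}] = [2]E_{i,b+1}$ and pushing every $E_{i,b+1}$ to the right using $E_{i,b+1}E_{i,b}^{j} = q^{2j}E_{i,b}^{j}E_{i,b+1}$ collects all terms into
$$[Q_i, E_{i,b}^r] = [2]\Bigl(\sum_{k=0}^{r-1} q^{2(r-1-k)}\Bigr) E_{i,b}^{r-1}E_{i,b+1} = q^{r-1}[r]\,[2]\,E_{i,b}^{r-1}E_{i,b+1}.$$
Dividing by $[r]!$ produces the claimed coefficient $q^{r-1}[2]$. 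For the $F$-identities, the same geometric summation works but $F_{i,b+1}$ must be moved to the \emph{left} of $F_{i,b}^{r-1-k}$; the relation $F_{i,b}F_{i,b+1}=q^{2}F_{i,b+1}F_{i,b}$ gives $F_{i,b}^{k}F_{i,b+1}=q^{2k}F_{i,b+1}F_{i,b}^{k}$, so the geometric sum evaluates to $q^{r-1}[r]$ as before. The $P_i$ identities are strictly analogous, with $E_{i,b}$ (resp.\ $F_{i,b}$) ending up on the side dictated by which generator it is paired against in the base commutator; the factors $q^{\pm c}$ come along for the ride from the base case.

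The six identities for $P_j, Q_j$ with $\la i,j\ra=-1$ follow by the same two-step argument, with base cases drawn from Case 2 of Section 3.2. The only difference is that those base commutators have coefficient $-1$ rather than $[2]$, which is why the $[2]$ disappears from the final formulas in the $j$-lines of the lemma. The commutation used in the inductive step is still $E_{i,b+1}E_{i,b}=q^{2}E_{i,b}E_{i,b+1}$ (or its $F$-analog), since $P_j,Q_j$ only shift the $E_{i,\cdot}$ index without introducing any $E_{j,\cdot}$.

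The main obstacle is bookkeeping rather than mathematics: one must match signs and powers of $q,q^{c}$ carefully when translating the base cases from Section 3.2 (which are written in terms of $Q^{[1]},P^{[1]}$) into the $Q_i,P_i$ conventions of Lemma \ref{lem:2}, and one must be careful about the direction in which the commutation $E_{i,b+1}E_{i,b}=q^{2}E_{i,b}E_{i,b+1}$ (vs.\ its $F$-analog, with opposite $q^{\pm 2}$) is applied in each of the eight cases. Once the base cases are correctly transcribed, the inductive step is a uniform one-line computation across all eight identities.
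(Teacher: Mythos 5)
Your proposal is correct and is essentially the paper's own argument: both proofs reduce everything to the $r=1$ commutators plus the quadratic relation $E_{i,b+1}E_{i,b}=q^2E_{i,b}E_{i,b+1}$ (resp. its $F$-analogue), and your closed-form Leibniz expansion with geometric sum $\sum_{k=0}^{r-1}q^{2k}=q^{r-1}[r]$ is just the unrolled version of the paper's induction, whose step rests on the same identity $1+q^{r+1}[r]=q^{r}[r+1]$. The only caveat is the base case: the paper simply declares $r=1$ to be ``part of the definition'' (i.e.\ $Q_i,P_i$ are normalized so that $[Q_i,E_{i,b}]=[2]E_{i,b+1}$, $[Q_i,F_{i,b}]=-q^c[2]F_{i,b+1}$, etc.\ hold on the nose, as in the renormalized presentation), whereas a literal substitution of $Q_i^{[1]}=-qQ_i$, $P_i^{[1]}=-q^{-1}P_i$ into the Section 3 identities, as you propose, produces extra factors of $-q^{\pm 1}$ and $q^{\pm c}$ that do not match the lemma's stated coefficients --- this is a convention mismatch internal to the paper rather than a flaw in your induction, but it means the ``bookkeeping'' you defer cannot be fully reconciled in the form you describe.
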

\begin{proof}
We prove the first relation by induction on $r$ (the others are proved in exactly the same way). The base case $r=1$ is part of the definition. To prove the induction step we have\begin{eqnarray*}Q_i E_{i,b}^{(r)} E_{i,b}
&=& E_{i,b}^{(r)} Q_i E_{i,b} + q^{r-1}[2] E_{i,b}^{(r-1)} E_{i,b+1} E_{i,b} \\
&=& E_{i,b}^{(r)} E_{i,b} Q_i + [2] E_{i,b}^{(r)} E_{i,b+1} + q^{r-1}[2] q^2 E_{i,b}^{(r-1)} E_{i,b} E_{i,b+1} \\
&=& E_{i,b}^{(r)} E_{i,b} Q_i + E_{i,b}^{(r)} E_{i,b+1} [2] \left( 1 + q^{r-1} q^2 [r] \right) \\
&=& E_{i,b}^{(r)} E_{i,b} Q_i + E_{i,b}^{(r)} E_{i,b+1} [2] [r+1] q^r.
\end{eqnarray*}
This means that $[r+1] [Q_i, E_{i,b}^{(r+1)}] = q^r [2] [r+1] E_{i,b}^{(r)} E_{i,b+1}$ which implies the induction step.
\end{proof}

Let $U_q^\Z(\hg)$ be the subalgebra of $U_q(\hg)$ generated over $\Z[q,q^{-1}]$ by the divided powers $E_{i,m}^{(r)}$ and $F_{i,n}^{(r)}$ together with $Q^{[1^a]}$ and $P^{[1^a]}$. As was noted to us by the referee, it follows from Theorem 2 of \cite{BCP} that the natural map $U_q^\Z(\hg) \otimes_{\Z[q,q^{-1}]} \k(q) \rightarrow U_q(\hg)$ is an isomorphism. It is natural to conjecture that $U_q^\Z(\hg)$ is also free over $\Z[q,q^{-1}]$, although we have not proven this (this observation appears in \cite{CP1} and also in the remark in section 1 of \cite{Nak1}).

\subsection{Renormalization}\label{sec:renormalized}

In \cite{CL2} we construct a categorical action of the quantum affine algebra using the Heisenberg algebra from \cite{CL1}. However, the resulting decategorified relations do not match up identically with those in the above presentations. More precisely, they are off by a sign or some power of $q$ in a few instances. We now renormalize the generators of the idempotent vertex realization $\dU_q(\hg)$ of the quantum affine algebra so that the resulting presentation matches up with that in \cite{CL2}. So the point of this renormalized realization of $\dU_q(\hg)$ is that it occurs naturally in categorification.

To define the renormalization we need an asymmetric pairing $(\cdot,\cdot)$ on $\hX$. To define this pairing, fix an orientation of the original Dynkin diagram of $\g$, so that an edge between $i$ and $j$ in the Dynkin diagram is now oriented $i \rightarrow j$ or $j\rightarrow i$, but not both. Then 
$$(i,j) := 
\begin{cases}
1 \text{ if } i = j \\
-1 \text{ if } i \rightarrow j \\
0 \text{ otherwise }
\end{cases}$$
while $(i,\delta) = 0 = (\delta, i)$ and $(\delta,\delta) = 0$. Moreover, $(\Lambda_i,j) = \delta_{i,j}$ and $(j,\Lambda_i)=0$ while $(\Lambda_i,\delta) = 1$ and $(\delta,\Lambda_i)=0$. In particular, this means that if $\l$ is a weight appearing in the representation of level $c$ then $(\l,\delta) = c$ and $(\delta,\l) = 0$. It is easy to check using this definition that $\la \l_1,\l_2 \ra = (\l_1,\l_2) + (\l_2,\l_1)$ for any $\l_1 \in \hX$ and $\l_2 \in \hY$. 

We define the renormalization as follows. It takes $q \mapsto -q$ and 
$$E_{i,m} \1_\l \mapsto (-1)^{(\l,i)} q^{mc} E_{i,m} \1_\l \ \ \ \ \1_\l F_{i,m} \mapsto - (-1)^{(i,\l)} (-q)^{mc} \1_\l F_{i,m}$$
$$P_i^{(n)} \1_\l \mapsto - P_i^{(n)} \1_\l \ \ P_i^{(1^n)} \1_\l \mapsto - P_i^{(1^n)} \1_\l \ \ 
\1_\l Q_i^{(n)} \mapsto - (-1)^{cn} \1_\l Q_i^{(n)} \ \ \1_\l Q_i^{(1^n)} \mapsto - (-1)^{cn} \1_\l Q_i^{(1^n)}$$
where $c = (\l,\delta)$ is the level. 

For example, the relation $q^c [Q_i,E_{i,b}] \1_\l = [2] E_{i,b+1} \1_\l$ becomes 
$$(-q)^c \left( (-1)^{c+1} Q_i (-1)^{(\l,i)} q^{bc} E_{i,b} - (-1)^{(\l+\delta,i)} q^{bc} E_{i,b} (-1)^{c+1} Q_i \right) \1_\l = -[2] (-1)^{(\l,i)} q^{(b+1)c} E_{i,b+1} \1_\l$$
which simplifies to give $[Q_i,E_{i,b}] \1_\l = [2] E_{i,b+1} \1_\l$. The resulting relations turn out to eb independent of the choice of orientation of the Dynkin diagram. We summarize them below. 

The renormalized (idempotent) vertex realization has generators 
$$E_{i,r} \1_\l, F_{i,r} \1_\l, Q_i^{(n)} \1_\l, P_i^{(n)} \1_\l, Q_i^{(1^n)} \1_\l, P_i^{(1^n)} \1_\l, \text{ where } i \in \I \text{ and } r,k \in \Z.$$
The $P_i^{[1^n]},Q_i^{[1^n]},P_i^{[n]}, Q_i^{[n]}$ are defined as before. The relations in this renormalized realization are then modified as follows.

\begin{enumerate}
\item This condition is redundant.
\item This condition is unchanged. 
\item This condition is unchanged. 
\item We have 
\begin{eqnarray}
\label{eq:PQ1'} Q_j^{(n)} P_i^{(m)} \1_\l &=&
\begin{cases}
\sum_{k \ge 0} \Sym^k([2][c]) P_i^{(m-k)} Q_i^{(n-k)} \1_\l \text{ if } i = j \\
\sum_{k \ge 0} \Lambda^k([c]) P_i^{(m-k)} Q_j^{(n-k)} \1_\l \text{ if } \la i, j \ra = -1 \\
P_i^{(m)} Q_j^{(n)} \1_\l \text{ if } \la i, j \ra = 0
\end{cases} \\
\label{eq:PQ2'}
Q_j^{(1^n)} P_i^{(m)} \1_\l &=&
\begin{cases}
\sum_{k \ge 0} \Lambda^k([2][c]) P_i^{(m-k)} Q_i^{(1^{n-k})} \1_\l \text{ if } i = j \\
\sum_{k \ge 0} \Sym^k([c]) P_i^{(m-k)} Q_j^{(1^{n-k})} \1_\l \text{ if } \la i, j \ra = -1 \\
P_i^{(m)} Q_j^{(1^n)} \1_\l \text{ if } \la i, j \ra = 0
\end{cases}
\end{eqnarray}
and likewise if you exchange $(a)$ with $(1^a)$ everywhere.

\item We have
\begin{eqnarray*}
P_i^{(n)} \1_\l = \1_\mu P_i^{(n)} \1_\l = \1_\mu P_i^{(n)} &\text{ and }& P_i^{(1^n)} \1_\l = \1_\mu P_i^{(1^n)} \1_\l = \1_\mu P_i^{(1^n)} \\
Q_i^{(n)} \1_\mu = \1_\l Q_i^{(n)} \1_\mu = \1_\l Q_i^{(n)} &\text{ and }& Q_i^{(1^n)} \1_\mu = \1_\l Q_i^{(1^n)} \1_\mu = \1_\l Q_i^{(1^n)}
\end{eqnarray*}
where $\mu = \l + nc \delta$.

\item We have 
\begin{eqnarray*}
\label{eq:q'E1}
[Q_i^{[1^{a+1}]}, E_{i,b}] \1_\l &=&
\begin{cases}
q^2 Q_i^{[1^a]} E_{i,b+1} \1_\l - q^{-2} E_{i,b+1} Q_i^{[1^a]} \1_\l \text{ if } a > 0 \\
[2]E_{i,b+1} \1_\l \text{ if } a = 0.
\end{cases} \\
\label{eq:q'E2}
q^{-c} [Q_i^{[1^{a+1}]}, F_{i,b}] \1_\l &=&
\begin{cases}
q^{-2} Q_i^{[1^a]} F_{i,b+1} \1_\l - q^2 F_{i,b+1} Q_i^{[1^a]}  \1_\l \text{ if } a > 0 \\
- [2]F_{i,b+1}\1_\l \text{ if } a = 0.
\end{cases} \\
\label{eq:q'E3}
q^c [P_i^{[1^{a+1}]}, E_{i,b+1}]\1_\l &=&
\begin{cases}
q^2 E_{i,b} P_i^{[1^{a}]} \1_\l - q^{-2} P_i^{[1^{a}]} E_{i,b}\1_\l \text{ if } a > 0 \\
[2] E_{i,b}\1_\l \text{ if } a = 0
\end{cases} \\
\label{eq:q'E4}
[P_i^{[1^{a+1}]}, F_{i,b+1}]\1_\l &=&
\begin{cases}
q^{-2} F_{i,b} P_i^{[1^{a}]} \1_\l - q^2 P_i^{[1^{a}]} F_{i,b}\1_\l \text{ if } a > 0 \\
- [2] F_{i,b}\1_\l \text{ if } a = 0.
\end{cases}
\end{eqnarray*}
while if $\la i, j \ra = -1$ we have
\begin{eqnarray*}
\label{eq:q'E5}
[Q_j^{[1^{a+1}]}, E_{i,b}] \1_\l &=&
\begin{cases}
qE_{i,b+1} Q_j^{[1^a]}\1_\l - q^{-1} Q_j^{[1^a]} E_{i,b+1}\1_\l \text{ if } a > 0 \\
E_{i,b+1}\1_\l \text{ if } a = 0.
\end{cases} \\
\label{eq:q'E6}
q^{-c} [Q_j^{[1^{a+1}]}, F_{i,b}]\1_\l &=&
\begin{cases}
q^{-1} F_{i,b+1} Q_j^{[1^a]}\1_\l - q Q_j^{[1^a]} F_{i,b+1}\1_\l \text{ if } a > 0 \\
- F_{i,b+1}\1_\l \text{ if } a = 0
\end{cases} \\
\label{eq:q'E7}
q^c [P_j^{[1^{a+1}]}, E_{i,b+1}]\1_\l &=&
\begin{cases}
q^{-1}E_{i,b} P_j^{[1^{a}]}\1_\l - q P_j^{[1^{a}]} E_{i,b}\1_\l \text{ if } a > 0 \\
E_{i,b}\1_\l \text{ if } a = 0
\end{cases} \\
\label{eq:q'E8}
[P_j^{[1^{a+1}]}, F_{i,b+1}]\1_\l &=&
\begin{cases}
qF_{i,b} P_j^{[1^{a}]}\1_\l - q^{-1} P_j^{[1^{a}]} F_{i,b}\1_\l \text{ if } a > 0 \\
- F_{i,b}\1_\l \text{ if } a = 0.
\end{cases}
\end{eqnarray*}

\item We have
$$[E_{i,a}, F_{i,b}] \1_\l =
\begin{cases}
q^{-bc} q^{\la \l, i \ra} Q_i^{[1^{a+b}]} \1_\l \text{ if } a+b > 0 \\
q^{-ac} q^{- \la \l, i \ra} P_i^{[1^{-a-b}]} \1_\l \text{ if } a+b < 0 \\
[\la \l,i \ra + ac] \1_\l \text{ if } a+b=0.

\end{cases}$$
while if $i \ne j$ then $[E_{i,a}, F_{j,b}] \1_\l = 0$.

\item This condition is unchanged. 

\item For any $m,n \in \Z$, if $\la i, j \ra = -1$ we have
\begin{eqnarray*}
E_{i,m}E_{j,n+1} \1_\l - q E_{j,n+1} E_{i,m} \1_\l &=& E_{j,n} E_{i,m+1} \1_\l - q E_{i,m+1} E_{j,n} \1_\l \\
F_{i,m+1}F_{j,n} \1_\l - q F_{j,n} F_{i,m+1} \1_\l &=& F_{j,n+1} F_{i,m} \1_\l - q F_{i,m} F_{j,n+1} \1_\l 
\end{eqnarray*}
while if $\la i, j \ra = 0$ then
$$E_{i,m}E_{j,n} \1_\l = E_{j,n}E_{i,m} \1_\l \text{ and } F_{i,m}F_{j,n} \1_\l = F_{j,n}F_{i,m} \1_\l.$$

\item This condition is unchanged. 
\end{enumerate}

\end{document}